\documentclass[12pt]{article}

\usepackage[T1]{fontenc}
\DeclareFontFamily{T1}{calligra}{}
\DeclareFontShape{T1}{calligra}{m}{n}{<->s*[1.44]callig15}{}
\DeclareMathAlphabet\mathrsfso      {U}{rsfso}{m}{n}

\usepackage{amsmath, hyperref, amsfonts, amssymb, amstext, amsthm}
\usepackage[a4paper,total={6in,9in}]{geometry}
\usepackage{wrapfig}
\usepackage{latexsym,amsmath,amssymb}
\usepackage{graphics,epstopdf}
\usepackage[dvipsnames]{xcolor}
\usepackage{tikz}
\usepackage{parskip}
\usepackage{subcaption}
\usepackage[export]{adjustbox}

\setcounter{MaxMatrixCols}{10}
\topmargin=-6mm
\textwidth=15.5cm
\textheight=22cm

\newtheorem{definition}{Definition}
\newtheorem{theorem}{Theorem}

\newtheorem{lemma}{Lemma}
\numberwithin{equation}{section}
\sloppy


\numberwithin{equation}{section}

\thispagestyle{empty}

\parindent=20pt
\begin{document}
	\begin{center}
		{\Large \textbf{Boundary Interpolation on Triangles via Neural Network Operators}}
		
		\bigskip
		
		\textbf{Aaqib Ayoub Bhat}$^{1}$,
		\textbf{Asif Khan}$^{1}$
		\bigskip
		
		$^{1}$Department of Mathematics, Aligarh Muslim University, Aligarh 202002, India\\[0pt]
		bhataqib19@gmail.com; akhan.mm@amu.ac.in;
		
		\bigskip
		
	\end{center}	
	\textbf{Abstract.}
The primary objective of this study is to develop novel interpolation operators that interpolate the boundary values of a function defined on a triangle. This is accomplished by constructing New Generalized Boolean sum neural network operator $\mathcal{B}_{n_1, n_2, \xi }$ using a class of activation functions. Its interpolation properties are established and the estimates for the error of approximation corresponding to operator $\mathcal{B}_{n_1, n_2, \xi }$ is computed in terms of mixed modulus of continuity. The advantage of our method is that it does not require training the network. Instead, the number of hidden neurons adjusts the weights and bias.  Numerical examples are illustrated to show the efficacy of these newly constructed operators. Further, with the help of MATLAB, comparative and graphical analysis is given to show the validity and efficiency of the results obtained for these operators.
	
	\bigskip
	
\noindent	\textbf{MSC 2020.} 41A05, 41A35, 41A80.\\
	\textbf{Keywords.} Neural network operators; Sigmoidal function; Interpolation;  Generalized Boolean sum neural network operators; Mixed modulus of continuity; Quantitative estimates. \\

	\bigskip

	\section{Introduction}

Neural networks (NNs) initially appeared in the early 1900s to develop a basic human brain model. The fundamental aim was to create a network architecture in layers, with each node representing the behavior of a biological neuron. Neural networks are widely investigated for their applications in disciplines like artificial intelligence and machine learning,  computer vision \cite{article2}, medical sciences for diagnosis of various diseases \cite{tan2, tan1, tan3}, self-driving cars, real-time translation software, image processing, stock market predictions, weather prediction, social media user behavior analysis, etc. Additionally, NNs are useful in Approximation Theory, it is widely known that NNs can serve as universal approximators.

	A brief introduction to Neural Network Operators is as following:\\
	Let $\xi:\mathbb{R}\rightarrow \mathbb{R}$ be the activation function. Feed-forward FNNs with  one hidden layer are mathematically expressed as
	\begin{equation}\label{aa}
			\mathcal{N}_n(x)=\sum_{k=1}^{n}c_{k}\xi(a_{k}\cdot x+b_{k})\quad x\in \mathbb{R}^{d},\quad d\in \mathbb{N}
		\end{equation}
	
\noindent	where $c_{k}$ $\in$ $\mathbb{R}$ , $a_{k} \in \mathbb{R}^d, $ and  $b_{k} \in \mathbb{R}$ for $k=1,2,\cdots , n$ are the coefficients, weights and the threshold values respectively. If $\xi$ is defined
	on $\mathbb{R}$, then $\mathcal{N}$ becomes
	\begin{equation}
			\mathcal{N}_n(x)=\sum_{k=1}^{n}c_{k}\xi(\langle A_{k}\cdot x\rangle +b_{k})
		\end{equation}
	Further $\langle A_{k}\cdot x\rangle$ is the inner product of $A_{k}$ and $x$.\\
	 A FNN can approximate every continuous function defined on a compact set to any preassigned degree of accuracy. This can be done by increasing the number of hidden layers \cite{Bar}. Cardaliaguet and Euvrard introduced NN operators in \cite{CARDALIAGUET1992207}, primarily on bell-shaped activation functions having compact support. Several mathematicians have focused on developing NN-based approximation techniques. Cybenko's \cite{Cybenko1989ApproximationBS} work is frequently cited as a foundation for this topic. Cybenko investigated single layer NNs and proved the corresponding approximation theorem using non-constructive approach, such as the Hahn-Banach theorem of functional analysis.
	 
	 Barnhill, Birkhoff, and Gordon's  \cite{BARNHILL1973114} introduction of Boolean sum interpolation theory on triangles inspired many mathematicians to formulate various interpolation operators that interpolate a given function defined on a triangular domain on the boundary of triangle. e.g., \cite{Barnhill1974306, KHAN20215909, Nielson_Thomas_Wixom_1979}. This sort of approximation is used in fields such as finite element analysis \cite{marshall1978blending} and computer-aided geometric design \cite{BARNHILL197769}. In 1912 S.N. Bernstein \cite{bernstein1912constructive} gave the landmark constructive proof of Weierstrass approximation theorem by constructing Bernstein polynomials which lead to the development of constructive approximation theory. P. Blaga et al \cite{Blaga_Coman_2009} extended Bernstein's result on a triangular domain by constructing Bernstein-type operators which interpolate the given target function on the edges of the triangle. For some recent advances on Bernstein type operators on triangular domains and their applications, one is accessible to look at \cite{cai2023approximation, khan2021phillips}.
	 
	 Inspired by Bernstein's constructive proof, NN operators encountered constructive approach and is often implemented by researchers, for example \cite{COSTARELLI201528, COSTARELLI201480, KADAK2022114426, MHASKAR1995151}. The primary objective of this paper is to develop the neural network operators which interpolate the target function on the edges of the triangular domain.

	\section{Preliminaries and known results}
	\begin{definition}
		A measurable function $\xi :\mathbb{R}\rightarrow \mathbb{R}$ is said to be sigmoidal  function if $\xi$ satisfies the following:\\
		$\displaystyle\lim_{x\rightarrow \infty}\xi(x)=1,$ and $\displaystyle\lim_{x\rightarrow -\infty}\xi(x)=0.$ 
	\end{definition}
	
	\noindent Qian et al. \cite{QIAN2022126781} introduced a newly class of sigmodal functions $\mathrsfso{A}(m)$ defined as follows:\\
	
	\begin{definition}
		For fixed $m\in \mathbb{R}^+$, we say a sigmoidal function $\xi \in \mathrsfso{A}(m)$ if it satisfies the following two conditions:\\
		\begin{enumerate}
			\item[(i)] $\xi(x)$ is nondecreasing;
			\item[(ii)] $\xi(x)=1$ for $x\geq m$, and $\xi(x)=0$ for $x\leq -m.$
		\end{enumerate}
	\end{definition}
	
	Now, we take a kind of combinations of the translations of $\xi \in \mathrsfso{A}(m)$ and define a new function $\Psi$ as follows:
	\begin{equation}
		\Psi(x)=\xi(x+m)-\xi(x-m).
	\end{equation}
	
	$\Psi$ satisfies many basic properties given in following lemma.\\
	
	\begin{lemma} \label{l1}
		For $\xi \in \mathrsfso{A}(m)$, $\Psi$ satisfies the following properties:
		\begin{enumerate}
			\item[P1:] $\Psi$ is non-decreasing; 
			\item[P2:] $\Psi$ is non-decreasing for $x\leq 0$, and non-increasing for $x\geq 0$;
			\item [P3:]$supp(\Psi)$ $\subseteq [-2m,2m]$
			\item [P4:] $\Psi(x)+\Psi(x-2m)=1,\quad x\in [0,2m]$
		\end{enumerate}
	\end{lemma}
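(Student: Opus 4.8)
The plan is to reduce everything to the ``clamping'' condition (ii) in the definition of $\xi\in\mathrsfso{A}(m)$: since $\xi\equiv 0$ on $(-\infty,-m]$ and $\xi\equiv 1$ on $[m,\infty)$, shifting by $\pm m$ shows that for $x\le 0$ one has $x-m\le -m$, hence $\xi(x-m)=0$ and
\[
\Psi(x)=\xi(x+m),\qquad x\le 0,
\]
while for $x\ge 0$ one has $x+m\ge m$, hence $\xi(x+m)=1$ and
\[
\Psi(x)=1-\xi(x-m),\qquad x\ge 0 .
\]
These two expressions agree at $x=0$, where both equal $\xi(m)=1-\xi(-m)=1$, so $\Psi$ is well defined with $\Psi(0)=1$. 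This piecewise description is the engine for all four parts.

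For P1 I would read the assertion as non-negativity of $\Psi$ (the literal word ``non-decreasing'' cannot be meant, since P2 says $\Psi$ rises and then falls): from $x+m\ge x-m$ and the monotonicity of $\xi$ in condition (i) one gets $\xi(x+m)\ge\xi(x-m)$, so $\Psi(x)\ge0$ for every $x$; the two displayed formulas also give $\Psi(x)\le1$. For P2, the first formula shows $\Psi(x)=\xi(x+m)$ is non-decreasing on $(-\infty,0]$ because $\xi$ is non-decreasing, and the second shows $\Psi(x)=1-\xi(x-m)$ is non-increasing on $[0,\infty)$ for the same reason.

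For P3, I would check the two tails directly: if $x>2m$ then $x-m>m$ and a fortiori $x+m>m$, so $\Psi(x)=1-1=0$; if $x<-2m$ then $x+m<-m$ and a fortiori $x-m<-m$, so $\Psi(x)=0-0=0$. Hence $\Psi$ vanishes off $[-2m,2m]$, which gives $\mathrm{supp}(\Psi)\subseteq[-2m,2m]$. For P4, I would take $x\in[0,2m]$; then $x\ge 0$ gives $\Psi(x)=1-\xi(x-m)$, while $x-2m\in[-2m,0]$ gives $\Psi(x-2m)=\xi\big((x-2m)+m\big)=\xi(x-m)$, and summing, the two $\xi(x-m)$ terms cancel to yield $\Psi(x)+\Psi(x-2m)=1$.

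There is no genuine obstacle here; the computation is elementary once the piecewise form of $\Psi$ is in hand. The only points that demand a little care are matching the two branches at the origin (so that the ``rises then falls'' picture of P2 is globally coherent), reconciling the apparent typo in P1 with P2, and noting in P3 that the claim is merely the inclusion of the support in $[-2m,2m]$ — this is immediate from $\Psi$ being identically zero outside that interval and is insensitive to whether one takes a closure in the definition of support.
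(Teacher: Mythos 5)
Your proof is correct and complete. Note that the paper itself gives no proof of this lemma at all --- it is imported from Qian and Yu \cite{QIAN2022126781} and stated without argument --- so there is nothing to compare against; your write-up in fact supplies the missing justification. The piecewise reduction $\Psi(x)=\xi(x+m)$ for $x\le 0$ and $\Psi(x)=1-\xi(x-m)$ for $x\ge 0$, obtained from the clamping condition (ii), is exactly the right engine, and each of P2, P3, P4 follows from it as you describe. Your observation about P1 is also well taken: as literally printed (``non-decreasing'') it contradicts P2, and the intended property (which is what the source reference states, and what is actually used later, e.g.\ in the error estimates where $\Psi$ multiplies absolute values) is non-negativity, $0\le\Psi\le 1$; your derivation of that from the monotonicity of $\xi$ and $x+m\ge x-m$ is correct. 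The only cosmetic remark is that P4 is also an immediate consequence of Lemma \ref{l2} (the partition-of-unity identity) restricted to two consecutive nodes, but proving it directly from the piecewise form, as you do, is cleaner and is what the later lemmas implicitly rely on.
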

	
	In sections \ref{s3} and \ref{s4}, we have constructed new parametric extensions  $\mathcal{S}_{n_1,\xi}^{x}$, $\mathcal{S}_{n_2,\xi}^{y}$ and a bivariate extension $\mathcal{P}_{n_1, n_2, \xi}$ of the univariate neural network operator (\ref{operator}) on triangular domain which interpolate the given real valued function on the edges of the triangle. We estimated the error of approximation attained by these neural network operators in terms of modulus of continuity. In section \ref{s5}, we established the neural network version of the Generalized Boolean Sum operators for B\"{o}gel continuous (B-continuous) functions and established the estimates for the error of approximation in terms of mixed modulus of continuity. In sections \ref{s6} and \ref{s7}, we examined our theoretical findings using examples and computed the approximation error for the target function.
	
	\section{Construction of operators} \label{s3}
	We recall a neural network operator  with single hidden layer  introduced by Qian et al. \cite{QIAN2022126781}. Let $\Psi \in \mathrsfso{A}(m)$ and $f:[a,b]\rightarrow \mathbb{R}$ be a bounded and measurable function, the operator $\mathcal{S}_{n, \xi}$ is defined as
	\begin{equation} \label{operator}
		\mathcal{S}_{n,\xi}(f,x) = \sum_{k=0}^{n}f(x_k)\Psi\left(\frac{2m}{h}(x-x_k)\right),
	\end{equation}
	$ \text{where} ~n \in \mathbb{N}^+,~  x_k= a+kh, ~ k=0,1,\cdots n,~\text{with}~ h=\frac{b-a}{n}.$\\
	Now we introduce the parametric extensions $\mathcal{S}_{n_1,\xi}^{x}$ and $\mathcal{S}_{n_2,\xi}^{y}$ of univariate neural network operator $\mathcal{S}_{n,\xi}$ on a triangular domain; i.e., $\mathcal{S}_{n_1,\xi}^{x}$ is defined in such a way as if $\mathcal{S}_{n_1,\xi}$ were applied to a bivariate function $\mathrsfso{F}$ with the second variable fixed and $\mathcal{S}_{n_2,\xi}^{y}$ defined in the similar way.\\
	\indent	Due to the fact that triangle is affine invariant, we consider a standard triangular domain $\Delta$ as follows;
	\begin{equation*}
		\Delta=\{(x,y)\in \mathbb{R}^2:x\geq0, y \geq 0, x+y\leq a, a > 0 \in \mathbb{R} \}
	\end{equation*}
	\begin{wrapfigure}{r}{0.48\textwidth }
		\centering
		\begin{tikzpicture}
		\draw [line width=0.3mm][violet] (0,0) -- (0,4);
		
		\draw [line width=0.3mm][violet] (0,0) -- (4,0);
		
		\draw [line width=0.3mm][violet] (4,0) -- (0,4);
		
		\filldraw (0,0) circle (1pt) node[below left] {(0,0)};
		
		\filldraw (0,4) circle (1pt) node[above] {(0,a)};
		
		\filldraw (4,0) circle (1pt) node[below right] {(a,0)};
		
		\draw [line width=0.3mm][violet] (0,1.3) -- (2.7,1.3);
		
		\draw [line width=0.3mm][violet] (1.3,0) -- (1.3,2.7);
		\filldraw [RedViolet] (1.4,0)   node[below]{$C_x$};
		
		\filldraw [RedViolet] (0, 1.4)   node[left]{$A_y$};
		
		\filldraw [RedViolet] (2.6, 1.5)   node[right]{$B_y$};
		
		\filldraw [RedViolet] (1.5,2.6)   node[above]{$D_x$};
		
		\filldraw [RedViolet] (-0.1,2)   node[left]{$\Gamma_2$};
		
		\filldraw [RedViolet] (2.2, -0.1)   node[below]{$\Gamma_1$};
		
		\filldraw [RedViolet] (2, 2)   node[above right]{$\Gamma_3$};
		\end{tikzpicture}
		\caption{  }\label{f11}
	\end{wrapfigure}
	Let
	\begin{align*}
		\varGamma_{1}&=\{(x,y)\in  \Delta: x= 0\},\\
		\varGamma_{2}&=\{(x,y)\in  \Delta: y= 0\}, and\\ \varGamma_{3}&=\{(x,y)\in  \Delta: x+y=a\}
	\end{align*}	
	\noindent Notice that $A_{y}=(0, y)$ and  $B_{y}=(a-y, y)$ are the end points of line segment $A_yB_y$ from $\varGamma_{2}$ to $\varGamma_{3}$ parallel to X-axis. Similarly $C_{x}=(x,0)$ and $D_{x}=(x,a-x)$ are the endpoints of line segment $C_xD_x$ from $\varGamma_{1}$ to $\varGamma_{3}$ parallel to Y-axis as shown in figure \ref{f11}.

	\noindent
	For a function  $\mathrsfso{F}:\Delta\rightarrow \mathbb{R}$ we define:  	
	\begin{equation}\label{bmx}
		(\mathcal{S}_{n_1,\xi}^{x}\mathrsfso{F})(x,y) =  \left\{
		\begin{array}{ll}
			\displaystyle\sum_{k=0}^{n_1}\mathrsfso{F}(x_k, y) \Psi\left(\frac{2mn_1}{a-y}(x-x_k)\right) \quad (x,y)\in  \Delta \setminus \{(0,a)\}& \\
			& \\
			\mathrsfso{F}(0,a),~~~~~~~~~~~~~~~~~~~~~~~~~~~~~~~~~~~~~~~~~(0,a) \in  \Delta&
		\end{array}%
		\right.
	\end{equation}
	and
	\begin{equation}\label{bmy}
		(\mathcal{S}_{n_2,\xi}^{y}\mathrsfso{F})(x,y) =  \left\{
		\begin{array}{ll}
		\displaystyle\sum_{k=0}^{n_2}\mathrsfso{F}(x,y_l) \Psi\left(\frac{2mn_2}{a-x}(y-y_l)\right) \quad (x,y)\in  \Delta \setminus \{(a,0)\}  & \\
			& \\
			\mathrsfso{F}(a,0),~~~~~~~~~~~~~~~~~~~~~~~~~~~~~~~~~~~~~~~~~~~~~~(a,0)\in  \Delta,&
		\end{array}%
		\right.
	\end{equation}
	where $n_1, n_2$ $\in \mathbb{N}^+$, $x_k =\frac{k}{n_1}(a-y)$, and $y_l =\frac{l}{n_2}(a-x).$\\
	
	Equation \ref{bmx} describes a neural network operator with single hidden layer in which $\mathrsfso{F}(x_k,y)$ and $\frac{2mn_1}{a-y}$ represents the connection weights between adjoining layers i.e., from input layer to hidden layer and from hidden layer to output layer respectively, and  $\frac{2mn_1}{a-y}x_k$ represents the threshold (bias). Clearly, the connection weights and bias depend on the number of hidden neurons $n_1$. Hence, weights and bias are adjusted by the number of hidden neurons. Similarly, equation \ref{bmy} defines a neural network operator with a single hidden layer.
	

	\begin{lemma}\label{l2}
		For any $(x,y) \in  \Delta $, we have
		\begin{equation}
			(\mathcal{S}_{n_1,\xi}^{x}\textbf{1})(x,y) =1,
		\end{equation}
		where $\textbf{1}(x,y)=1 \quad \forall (x,y) \in  \Delta.$
	\end{lemma}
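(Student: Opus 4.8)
The plan is to verify the identity directly from the definition of $\mathcal{S}_{n_1,\xi}^{x}$ in \eqref{bmx}, splitting into the two cases that appear in the piecewise definition. For the trivial case $(x,y)=(0,a)$ the operator returns the value of the constant function $\textbf{1}$ at that point, which is $1$, so there is nothing to prove. The substance lies in the case $(x,y)\in\Delta\setminus\{(0,a)\}$, where by definition
\begin{equation*}
	(\mathcal{S}_{n_1,\xi}^{x}\textbf{1})(x,y)=\sum_{k=0}^{n_1}\Psi\!\left(\frac{2mn_1}{a-y}(x-x_k)\right),\qquad x_k=\frac{k}{n_1}(a-y),
\end{equation*}
and I must show this telescoping-type sum equals $1$.

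The key step is to exploit property P4 of Lemma \ref{l1}, namely $\Psi(t)+\Psi(t-2m)=1$ for $t\in[0,2m]$, together with the compact support property P3, $\mathrm{supp}(\Psi)\subseteq[-2m,2m]$. First I would introduce the abbreviation $t=\frac{2mn_1}{a-y}\,x$ (well defined since $y<a$ on $\Delta\setminus\{(0,a)\}$), so that the $k$-th argument becomes $\frac{2mn_1}{a-y}(x-x_k)=t-2mk$. Since $0\le x$ and $x+y\le a$ forces $x\le a-y$, we have $t\in[0,2mn_1]$, so there is a unique index $j\in\{0,1,\dots,n_1\}$ with $t\in[2mj,2m(j+1))$ (treating the endpoint $t=2mn_1$ by taking $j=n_1-1$, or handled separately). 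For this $j$ the two consecutive terms $\Psi(t-2mj)$ and $\Psi(t-2m(j+1))$ have arguments in $[0,2m]$ and $[-2m,0]$ respectively; writing $s=t-2mj\in[0,2m]$, P4 gives $\Psi(s)+\Psi(s-2m)=1$. Every other term $\Psi(t-2mk)$ with $k\ne j,j+1$ has its argument outside $[-2m,2m]$, hence vanishes by P3. Summing, all but two terms drop out and the remaining two sum to $1$.

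The main obstacle — really the only delicate point — is the careful bookkeeping at the boundary of the index range: when $t$ lands exactly on a breakpoint $2mj$, or when $j+1$ would exceed $n_1$ (i.e. $t$ near $2mn_1$, corresponding to $x$ near $a-y$, the edge $\varGamma_3$). In the first situation one should check that $\Psi(0)+\Psi(-2m)=1$ is still delivered by P4 (the endpoint case), and in the second, $t=2mn_1$ gives $\Psi(t-2mn_1)=\Psi(0)$ while the ``missing'' term $\Psi(t-2m(n_1+1))=\Psi(-2m)=0$ anyway since $\Psi$ is $0$ at the left end of its support, so the sum over $k=0,\dots,n_1$ still equals $1$. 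I would also note monotonicity/continuity of $\Psi$ (P1, P2) is not needed here beyond what is already packaged into P3 and P4. Once these edge cases are dispatched, the identity $(\mathcal{S}_{n_1,\xi}^{x}\textbf{1})(x,y)=1$ holds for all $(x,y)\in\Delta$, completing the proof; the analogous statement for $\mathcal{S}_{n_2,\xi}^{y}$ follows by symmetry in the roles of $x$ and $y$.
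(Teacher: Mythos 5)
Your argument is correct: reducing the $k$-th argument to $t-2mk$ with $t=\frac{2mn_1}{a-y}x\in[0,2mn_1]$, discarding all but two consecutive terms via the support property P3, and collapsing the surviving pair to $1$ via P4 is exactly the partition-of-unity mechanism the authors rely on throughout (the paper in fact states Lemma \ref{l2} without proof, and later invokes the same P3/P4 bookkeeping in the error estimates, where only the terms $k=i,i+1$ survive). Your handling of the endpoint cases is also sound, since $\Psi(\pm 2m)=0$ for $\xi\in\mathrsfso{A}(m)$, so the ``missing'' term at $t=2mn_1$ contributes nothing.
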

	Firstly, we show that the operators defined in (\ref{bmx}) interpolate $\mathrsfso{F}$ on $\Gamma_2 \cup \Gamma_3.$\\
	
	\begin{theorem} \label{t1}
		Let $\mathrsfso{F}:\Delta\rightarrow \mathbb{R}$ be a bounded and measurable function, $\xi \in \mathrsfso{A}(m)$. Then
		\begin{equation*}
			(\mathcal{S}_{n_1,\xi}^{x}\mathrsfso{F})=\mathrsfso{F}~on~ \Gamma_2 \cup \Gamma_3.
		\end{equation*}
	\end{theorem}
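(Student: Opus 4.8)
The plan is to verify the interpolation identity on each edge separately. The edge $\Gamma_2=\{y=0\}$ is the generic case, while $\Gamma_3=\{x+y=a\}$ is the degenerate one where the nodes $x_k$ collapse, so I will handle them in that order.

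\textbf{Step 1: Interpolation on $\Gamma_2$.} Fix a point $(x,0)$ with $0\le x\le a$ (so $x\neq$ the apex, and $a-y=a>0$). Here the operator reads
\[
(\mathcal{S}_{n_1,\xi}^{x}\mathrsfso{F})(x,0)=\sum_{k=0}^{n_1}\mathrsfso{F}(x_k,0)\,\Psi\!\left(\tfrac{2mn_1}{a}(x-x_k)\right),\qquad x_k=\tfrac{k}{n_1}\,a.
\]
The key observation is that along $\Gamma_2$ this is exactly the univariate operator $\mathcal{S}_{n_1,\xi}$ of \eqref{operator} applied to $g(t)=\mathrsfso{F}(t,0)$ on $[0,a]$, with $h=a/n_1$, since $\tfrac{2mn_1}{a}=\tfrac{2m}{h}$. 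So it suffices to show $\mathcal{S}_{n_1,\xi}(g,x_j)=g(x_j)$ at the nodes and, in fact, $\mathcal{S}_{n_1,\xi}(g,x)=g(x)$ for \emph{all} $x\in[0,a]$ when $g$ is arbitrary bounded — but that is false in general for a discrete operator, so I should be careful: the theorem claims equality on all of $\Gamma_2$, which forces me to use structure. Reconsidering: the correct route is that on $\Gamma_2$ the restricted function $t\mapsto \mathrsfso{F}(t,0)$ together with the partition-of-unity property (Lemma~\ref{l2}, which gives $\sum_k\Psi(\cdot)=1$) and the support/overlap property P3--P4 of Lemma~\ref{l1} shows the sum telescopes to an interpolant that is linear between consecutive nodes; however the theorem as stated asserts reproduction of $\mathrsfso{F}$ itself on the whole edge. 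I therefore expect the intended reading is that $\Gamma_2$ (and $\Gamma_3$) are \emph{sampled} edges and the claim is nodal interpolation extended by the claim's phrasing; concretely, using P3 ($\mathrm{supp}\,\Psi\subseteq[-2m,2m]$) at $x=x_j$ only the term $k=j$ can be nonzero together with possibly adjacent ones, and P4 forces $\Psi(0)=1$ while $\Psi(\pm 2m)=0$, giving $(\mathcal{S}_{n_1,\xi}^{x}\mathrsfso{F})(x_j,0)=\mathrsfso{F}(x_j,0)$.

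\textbf{Step 2: Interpolation on $\Gamma_3$.} For $(x,y)$ with $x+y=a$ and $(x,y)\neq(0,a)$, we have $a-y=x$, hence $x_k=\tfrac{k}{n_1}(a-y)=\tfrac{k}{n_1}x$, so $x-x_k=x\bigl(1-\tfrac{k}{n_1}\bigr)$ and the argument of $\Psi$ becomes $\tfrac{2mn_1}{a-y}\cdot x\bigl(1-\tfrac{k}{n_1}\bigr)=\tfrac{2mn_1}{x}\cdot x\bigl(1-\tfrac{k}{n_1}\bigr)=2m(n_1-k)$. By P3, $\Psi(2m(n_1-k))=0$ unless $n_1-k\in\{-1,0,1\}$, i.e. $k\in\{n_1-1,n_1\}$ (ignoring $k=n_1+1$ which is out of range), and by P4 with the boundary values of $\Psi$ we get $\Psi(0)=1$, $\Psi(\pm 2m)=0$. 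Since $x_{n_1}=\tfrac{n_1}{n_1}(a-y)=a-y=x$, the only surviving term is $k=n_1$ with coefficient $\mathrsfso{F}(x,y)\cdot\Psi(0)=\mathrsfso{F}(x,y)$. Finally the apex $(0,a)$ is handled by the explicit second branch of the definition \eqref{bmx}, where $(\mathcal{S}_{n_1,\xi}^{x}\mathrsfso{F})(0,a)=\mathrsfso{F}(0,a)$ by fiat, and one checks continuity/consistency is not needed for the pointwise claim.

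\textbf{Main obstacle.} The delicate point is pinning down the exact values $\Psi(0)=1$ and $\Psi(\pm 2m)=0$ and, more importantly, justifying the claim on the \emph{entire} edge rather than merely at nodes; the cleanest argument uses that on $\Gamma_2$ the collection $\{\Psi(\tfrac{2m}{h}(x-x_k))\}_k$ is a partition of unity (Lemma~\ref{l2}) whose overlapping supports reproduce affine functions, combined with the fact that on $\Gamma_3$ the node collapse forces all but one weight to vanish identically, so reproduction there is exact and unconditional. I would write Step 2 first since it is the genuinely new phenomenon on a triangular domain, then note Step 1 follows from the univariate theory of \cite{QIAN2022126781}.
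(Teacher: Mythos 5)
Your Step 2 (the hypotenuse $\Gamma_3$) is correct and is exactly the paper's mechanism: the argument of $\Psi$ in the $k$-th term collapses to $2m(n_1-k)$, and since $\Psi(0)=\xi(m)-\xi(-m)=1$ while $\Psi(\pm 2m)=0$ (best checked directly, e.g.\ $\Psi(2m)=\xi(3m)-\xi(m)=0$, since P3 only asserts that the support is \emph{contained in} $[-2m,2m]$), only the $k=n_1$ term survives. The genuine gap is Step 1. Reading $\Gamma_2$ as the edge $\{y=0\}$, you correctly sensed that the claim would then be false off the nodes --- and it is: on $\{y=0\}$ the operator is just the univariate sampling operator (\ref{operator}) applied to $t\mapsto\mathrsfso{F}(t,0)$, and for instance with $\mathrsfso{F}(x,y)=x^2$, $a=1$, $n_1=1$ and the piecewise-linear activation $\sigma\in\mathrsfso{A}(1)$ of Section \ref{s6} one gets the value $\varphi(-1)=\tfrac12\neq\tfrac14=\mathrsfso{F}(\tfrac12,0)$ at $x=\tfrac12$. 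But having noticed this you did not resolve it: you retreated to nodal interpolation plus an appeal to partition of unity and affine reproduction, neither of which yields the stated identity on a whole edge for a general bounded measurable $\mathrsfso{F}$. As written, the non-hypotenuse half of your proof is missing.

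What you missed is that the displayed definitions of $\Gamma_1$ and $\Gamma_2$ are swapped relative to Figure \ref{f11} and to every proof in the paper: the paper's own proof of this theorem opens with ``let $(\tilde x,\tilde y)\in\Gamma_2$, we have $\tilde x=0$'', so the edge intended here is $\{x=0\}$ (and correspondingly the $\Gamma_1$ of Theorem \ref{t2} is $\{y=0\}$). That edge consists of the left endpoints $A_y=(0,y)$ of the segments on which $\mathcal{S}^{x}_{n_1,\xi}$ samples, so your node-collapse argument from Step 2 transfers verbatim with $k=0$ in place of $k=n_1$: at $x=0$ the $k$-th argument of $\Psi$ is $\frac{2mn_1}{a-\tilde y}\bigl(0-\tfrac{k}{n_1}(a-\tilde y)\bigr)=-2mk$, hence $\Psi(-2mk)=0$ for $k\ge 1$ and the sum reduces to $\mathrsfso{F}(0,\tilde y)\Psi(0)=\mathrsfso{F}(0,\tilde y)$. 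With that substitution your argument coincides with the paper's proof; without it, the interpolation on the second edge is unproven, and the statement you actually tried to prove on $\{y=0\}$ is false.
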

	\begin{proof}
		Let $(\tilde{x},\tilde{y})$ $\in \Gamma _2$ be arbitrary, we have
		$\tilde{x}=0$. Using the property (P3) in lemma \ref{l1}\\
		\begin{align*}
			(\mathcal{S}_{n_1,\xi}^{x}\mathrsfso{F})(0,\tilde{y})&=\sum_{k=0}^{n_1}\mathrsfso{F}(x_k, \tilde{y}) \Psi\left(\frac{2mn_1}{a-\tilde{y}}(0-x_k)\right), \quad (x,y)\in \Gamma_2 \setminus \{(0,a)\}\\
			&=\sum_{k=0}^{n_1}\mathrsfso{F}(x_k, \tilde{y}) \Psi \left(\frac{2mn_1}{a-\tilde{y}}(-\frac{k}{n_1}(a-\tilde{y}))\right)\\
			&=\sum_{k=0}^{n_1}\mathrsfso{F}(x_k, \tilde{y})\Psi(-2mk)\\
			&=\mathrsfso{F}(x_0, \tilde{y})\\
			&=\mathrsfso{F}(0, \tilde{y})
		\end{align*}
		
	Similarly, we can prove for $(\tilde{x},\tilde{y})$ $\in \Gamma _3$.
	
	\end{proof}
	We have a similar result for the operator $\mathcal{S}_{n_2,\xi}^{y}\mathrsfso{F}.$
	
	\begin{theorem} \label{t2}
		Let $\mathrsfso{F}:\Delta\rightarrow \mathbb{R}$ be a bounded and measurable function, $\xi \in \mathrsfso{A}(m)$. Then
		\begin{equation*}
			(\mathcal{S}_{n_2,\xi}^{y}\mathrsfso{F})=\mathrsfso{F}~on~ \Gamma_1 \cup \Gamma_3.
		\end{equation*}
	\end{theorem}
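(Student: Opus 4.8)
The plan is to reproduce, with the obvious change of variable, the computation used for Theorem~\ref{t1}, exploiting the symmetry between the two parametric extensions. Indeed $(\mathcal{S}_{n_2,\xi}^{y}\mathrsfso{F})(x,\cdot)$ is just the univariate operator $\mathcal{S}_{n_2,\xi}$ of (\ref{operator}) applied to the slice $t\mapsto\mathrsfso{F}(x,t)$ on $[0,a-x]$, so the nodes $y_l=\frac{l}{n_2}(a-x)$ satisfy $y_0=0$ and $y_{n_2}=a-x$; hence the two endpoints of the sampling segment $C_xD_x$ lie exactly on the edge $y=0$ (namely $\Gamma_1$) and on the edge $x+y=a$ (namely $\Gamma_3$). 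The whole statement therefore reduces to showing that the kernel $\Psi$ collapses to a Kronecker delta at these end nodes, and this is exactly what property (P3) of Lemma~\ref{l1} gives, once one also notes $\Psi(0)=\xi(m)-\xi(-m)=1$.

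First I would fix an arbitrary $(\tilde{x},\tilde{y})\in\Gamma_1$, so $\tilde{y}=0$; the vertex $(a,0)$ is excluded from the summation formula since $a-x=0$ there, but at that point (\ref{bmy}) is already defined to equal $\mathrsfso{F}(a,0)$. Substituting $\tilde{y}=0$ and $y_l=\frac{l}{n_2}(a-\tilde{x})$ into (\ref{bmy}) yields
\[
(\mathcal{S}_{n_2,\xi}^{y}\mathrsfso{F})(\tilde{x},0)=\sum_{l=0}^{n_2}\mathrsfso{F}(\tilde{x},y_l)\,\Psi\!\left(\frac{2mn_2}{a-\tilde{x}}\Bigl(0-\tfrac{l}{n_2}(a-\tilde{x})\Bigr)\right)=\sum_{l=0}^{n_2}\mathrsfso{F}(\tilde{x},y_l)\,\Psi(-2ml),
\]
and by (P3) every summand with $l\ge 1$ vanishes, leaving $\mathrsfso{F}(\tilde{x},y_0)=\mathrsfso{F}(\tilde{x},0)$. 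Then I would fix $(\tilde{x},\tilde{y})\in\Gamma_3$, so that $a-\tilde{x}=\tilde{y}$ and $y_l=\frac{l}{n_2}\tilde{y}$, which gives
\[
(\mathcal{S}_{n_2,\xi}^{y}\mathrsfso{F})(\tilde{x},\tilde{y})=\sum_{l=0}^{n_2}\mathrsfso{F}(\tilde{x},y_l)\,\Psi\!\left(\frac{2mn_2}{\tilde{y}}\Bigl(\tilde{y}-\tfrac{l}{n_2}\tilde{y}\Bigr)\right)=\sum_{l=0}^{n_2}\mathrsfso{F}(\tilde{x},y_l)\,\Psi\bigl(2m(n_2-l)\bigr),
\]
and now (P3) annihilates every summand except $l=n_2$, leaving $\mathrsfso{F}(\tilde{x},y_{n_2})=\mathrsfso{F}(\tilde{x},a-\tilde{x})=\mathrsfso{F}(\tilde{x},\tilde{y})$; the remaining vertex $(0,a)\in\Gamma_3$ is handled by the same computation, since there $a-x=a\neq0$.

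I do not expect a substantive difficulty; the argument runs exactly parallel to Theorem~\ref{t1}. The main points to be careful with are organizational: (a) identifying correctly that the $y$-extension interpolates on the edges $y=0$ and $x+y=a$ (so the roles of the first two edges are interchanged relative to the $x$-extension case), and (b) remembering that the degenerate vertex $(a,0)$, where the denominator $a-x$ vanishes, is not treated by the summation formula but by the second branch of the definition (\ref{bmy}), where it trivially equals $\mathrsfso{F}(a,0)$. The auxiliary fact $\Psi(2mj)=\delta_{j0}$ for $j\in\mathbb{Z}$ used above follows directly from the definition of $\Psi$, or equivalently from (P3) together with the partition-of-unity identity (the analogue of Lemma~\ref{l2}).
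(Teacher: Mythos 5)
Your proof is correct and is precisely the ``parallel'' computation the paper has in mind: the paper omits an explicit proof of Theorem \ref{t2}, stating only that it runs parallel to Theorem \ref{t1}, and your argument is exactly that computation with the roles of $x$ and $y$ interchanged, including the correct treatment of the excluded vertex $(a,0)$ via the second branch of the definition and the observation that $\Psi(2mj)=\delta_{j0}$. The only point worth flagging is that you (rightly) read $\Gamma_1$ as the edge $y=0$, which matches Figure \ref{f11} and is what the theorem must intend, even though the displayed definitions of $\Gamma_1$ and $\Gamma_2$ in the paper are swapped relative to that convention.
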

\noindent	The proof of this theorem runs parallel to the proof of previous theorem.\\

	\begin{definition}
		Let $C[a,b]$ denote the set of all continuous functions defined on compact interval $[a,b]$. Recall the definition of modulus of continuity of function f:
		\begin{equation}
			\omega(f,\delta)=\sup_{x,y \in [a,b],~ |x-y|\leq\delta}|f(x)-f(y)|
		\end{equation} 
		or equivalently,
		\begin{equation}
			\omega(f,\delta)=\sup_{x,y \in [a,b],~ |h|\leq\delta}|f(x+h)-f(x)|.
		\end{equation}	
	\end{definition}
	Now, we will compute the estimate for the error of approximation  corresponding the operator  $\mathcal{S}_{n_1,\xi}^{x}\mathrsfso{F}$.\\
	
	\begin{theorem}
		If $\mathrsfso{F}(\cdot,y)\in C[0,a-y]$, then 
		\begin{equation}\label{1}
			\|	(\mathcal{S}_{n_1,\xi}^{x}\mathrsfso{F})-\mathrsfso{F}\| \leq \omega (\mathrsfso{F}(\cdot,y),h_1), \quad \quad where ~ h_1=\frac{a-y}{n_1}.
		\end{equation}
	\end{theorem}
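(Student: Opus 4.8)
The plan is to combine the constant-reproduction identity of Lemma \ref{l2} with the localization built into $\Psi$. Fix the second variable $y$ and let $x\in[0,a-y]$ with $(x,y)\neq(0,a)$. Since $(\mathcal{S}_{n_1,\xi}^{x}\mathbf{1})(x,y)=1$ by Lemma \ref{l2}, we may subtract $\mathrsfso{F}(x,y)=\mathrsfso{F}(x,y)\,(\mathcal{S}_{n_1,\xi}^{x}\mathbf{1})(x,y)$ and group terms:
\begin{equation*}
(\mathcal{S}_{n_1,\xi}^{x}\mathrsfso{F})(x,y)-\mathrsfso{F}(x,y)=\sum_{k=0}^{n_1}\bigl(\mathrsfso{F}(x_k,y)-\mathrsfso{F}(x,y)\bigr)\,\Psi\!\left(\frac{2mn_1}{a-y}(x-x_k)\right),
\end{equation*}
where $x_k=\tfrac{k}{n_1}(a-y)\in[0,a-y]$. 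Taking absolute values, using $|\mathrsfso{F}(x_k,y)-\mathrsfso{F}(x,y)|\le\omega(\mathrsfso{F}(\cdot,y),|x-x_k|)$ (legitimate since $\mathrsfso{F}(\cdot,y)\in C[0,a-y]$) and $\Psi\ge 0$ (a consequence of (P2) and (P3) in Lemma \ref{l1}, as $\Psi$ is nondecreasing up to $0$ from the value $0$ on $(-\infty,-2m]$ and nonincreasing afterwards down to the value $0$ on $[2m,\infty)$), the estimate reduces to controlling $\sum_{k=0}^{n_1}\omega(\mathrsfso{F}(\cdot,y),|x-x_k|)\,\Psi\!\left(\tfrac{2mn_1}{a-y}(x-x_k)\right)$.

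The key step is the localization. By property (P3) in Lemma \ref{l1}, $\Psi$ is supported in $[-2m,2m]$, so the $k$-th term is nonzero only if $\tfrac{2mn_1}{a-y}|x-x_k|\le 2m$, i.e.\ only if $|x-x_k|\le h_1$ with $h_1=\tfrac{a-y}{n_1}$. For every such index, monotonicity of $\delta\mapsto\omega(\mathrsfso{F}(\cdot,y),\delta)$ gives $\omega(\mathrsfso{F}(\cdot,y),|x-x_k|)\le\omega(\mathrsfso{F}(\cdot,y),h_1)$, so this constant may be pulled out of the sum. Using $\Psi\ge 0$ once more and $\sum_{k=0}^{n_1}\Psi\!\left(\tfrac{2mn_1}{a-y}(x-x_k)\right)=1$ from Lemma \ref{l2}, we obtain
\begin{equation*}
\bigl|(\mathcal{S}_{n_1,\xi}^{x}\mathrsfso{F})(x,y)-\mathrsfso{F}(x,y)\bigr|\le\omega(\mathrsfso{F}(\cdot,y),h_1)\sum_{k=0}^{n_1}\Psi\!\left(\tfrac{2mn_1}{a-y}(x-x_k)\right)=\omega(\mathrsfso{F}(\cdot,y),h_1).
\end{equation*}

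Finally, the excluded vertex $(0,a)$, where $a-y=0$, is handled directly: by definition $(\mathcal{S}_{n_1,\xi}^{x}\mathrsfso{F})(0,a)=\mathrsfso{F}(0,a)$, so the pointwise error is $0\le\omega(\mathrsfso{F}(\cdot,y),h_1)$. Passing to the supremum over $x\in[0,a-y]$ (with $y$ regarded as fixed, matching the $y$-dependent right-hand side) gives \eqref{1}. I expect the only step needing genuine care to be the localization argument — verifying that every node $x_k$ carrying a nonzero weight lies within distance $h_1$ of $x$, and that the weights $\Psi(\cdot)$ are nonnegative so that the partition-of-unity averaging applies; everything else is the standard modulus-of-continuity bookkeeping.
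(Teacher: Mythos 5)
Your proof is correct and follows essentially the same route as the paper: reproduce constants via Lemma \ref{l2}, localize the sum using the support property (P3) so that only nodes with $|x-x_k|\le h_1$ contribute, bound each surviving difference by $\omega(\mathrsfso{F}(\cdot,y),h_1)$, and conclude with the partition of unity. You are slightly more careful than the paper in spelling out the nonnegativity of $\Psi$ and the excluded vertex $(0,a)$, but the argument is the same.
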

	\begin{proof}
		Assume that $x\in [x_i,x_{i+1}], i=0,1,2,\cdots n-1$. Using (P3) in lemma \ref{l1}, we have
		\begin{equation}\label{a}
			\Psi\left(\frac{2mn_1}{a-y}(x-x_k)\right)=0, \quad k\neq i, i+1 .
		\end{equation}
		By using lemma \ref{l2} and equation (\ref{a})
		\begin{align*}
			|(\mathcal{S}_{n_1,\xi}^{x}\mathrsfso{F}(x,y))-\mathrsfso{F}(x,y)|&=\left|\sum_{k=0}^{n_1}\left(\mathrsfso{F}(x_k, y) -\mathrsfso{F}(x,y)\right) \Psi\left(\frac{2mn_1}{a-y}(x-x_k)\right) \right| \\
			&\leq \left|\mathrsfso{F}(x_i, y) -\mathrsfso{F}(x,y)\right|\Psi\left(\frac{2mn_1}{a-y}(x-x_i)\right) \\
			&\quad + \left|\mathrsfso{F}(x_{i+1}, y) -\mathrsfso{F}(x,y)\right| \Psi\left(\frac{2mn_1}{a-y}(x-x_{i+1})\right)\\
			&= \omega (\mathrsfso{F}(\cdot,y),h_1).
		\end{align*}
	\end{proof}
	We have a similar result for $\mathrsfso{F}(x,\cdot)\in C[0,a-x]$ as follows:\\
	\begin{theorem}
		If $\mathrsfso{F}(\cdot,y)\in C[0,a-x]$, then 
		\begin{equation}\label{2}
			\|	(\mathcal{S}_{n_1,\xi}^{y}\mathrsfso{F})-\mathrsfso{F}\| \leq \omega (\mathrsfso{F}(x,\cdot),h_2), \quad \text{where}~ h_2=\frac{a-x}{n_2}.
		\end{equation}
	\end{theorem}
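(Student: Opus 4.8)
The plan is to mirror the argument of the preceding theorem with the roles of $x$ and $y$ interchanged; here $\mathcal{S}_{n_2,\xi}^{y}$ acts as the univariate operator $\mathcal{S}_{n_2,\xi}$ applied in the second variable while $x$ is held fixed, on the interval $[0,a-x]$ with nodes $y_l=\frac{l}{n_2}(a-x)$, $l=0,1,\dots,n_2$, and step size $h_2=\frac{a-x}{n_2}$. First I would dispose of the vertex $(a,0)$, where $(\mathcal{S}_{n_2,\xi}^{y}\mathrsfso{F})(a,0)=\mathrsfso{F}(a,0)$ by the second branch of \eqref{bmy}, so the estimate holds there automatically; for the remaining points we may assume $x\neq a$, so that $a-x>0$ and $y\in[0,a-x]$.

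Next I would record the partition-of-unity identity $(\mathcal{S}_{n_2,\xi}^{y}\textbf{1})(x,y)=1$, the exact analog of Lemma \ref{l2}, proved the same way from properties (P3) and (P4) of Lemma \ref{l1}. Fixing $(x,y)\in\Delta$ with $x\neq a$, choose $j\in\{0,1,\dots,n_2-1\}$ with $y\in[y_j,y_{j+1}]$. Property (P3) forces $\Psi\!\left(\frac{2mn_2}{a-x}(y-y_l)\right)=0$ for $l\neq j,j+1$, so only two terms survive in the sum defining $\mathcal{S}_{n_2,\xi}^{y}\mathrsfso{F}$, and by the partition-of-unity identity those same two values of $\Psi$ sum to $1$. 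Writing $\mathrsfso{F}(x,y)=\mathrsfso{F}(x,y)\cdot(\mathcal{S}_{n_2,\xi}^{y}\textbf{1})(x,y)$ and subtracting gives
\begin{align*}
|(\mathcal{S}_{n_2,\xi}^{y}\mathrsfso{F})(x,y)-\mathrsfso{F}(x,y)|
&=\left|\sum_{l=0}^{n_2}\bigl(\mathrsfso{F}(x,y_l)-\mathrsfso{F}(x,y)\bigr)\Psi\!\left(\tfrac{2mn_2}{a-x}(y-y_l)\right)\right|\\
&\leq |\mathrsfso{F}(x,y_j)-\mathrsfso{F}(x,y)|\,\Psi\!\left(\tfrac{2mn_2}{a-x}(y-y_j)\right)\\
&\quad+|\mathrsfso{F}(x,y_{j+1})-\mathrsfso{F}(x,y)|\,\Psi\!\left(\tfrac{2mn_2}{a-x}(y-y_{j+1})\right).
\end{align*}

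Since $\Psi\geq 0$ (because $\xi$ is non-decreasing, whence $\xi(\cdot+m)\geq\xi(\cdot-m)$) and $|y-y_j|\leq h_2$, $|y-y_{j+1}|\leq h_2$, each difference is at most $\omega(\mathrsfso{F}(x,\cdot),h_2)$; combining this with the fact that the two surviving values of $\Psi$ add up to $1$ yields $|(\mathcal{S}_{n_2,\xi}^{y}\mathrsfso{F})(x,y)-\mathrsfso{F}(x,y)|\leq\omega(\mathrsfso{F}(x,\cdot),h_2)$, and passing to the supremum over $\Delta$ finishes the proof. I do not anticipate a genuine obstacle: the only points needing attention are the vertex $(a,0)$ handled by the definition, the vanishing of all but two terms via (P3), and the non-negativity of $\Psi$, all immediate from Lemma \ref{l1} and \eqref{bmy}.
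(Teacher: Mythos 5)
Your proof is correct and is exactly the argument the paper intends: the paper omits the proof with the remark that it runs parallel to the preceding theorem, and your write-up is precisely that symmetric argument (localization via (P3), the partition-of-unity identity, and the modulus-of-continuity bound with step $h_2=\frac{a-x}{n_2}$). The extra care you take with the vertex $(a,0)$ and the non-negativity of $\Psi$ is sound and, if anything, slightly more complete than the paper's treatment.
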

\noindent	This theorem can be proved in a similar way as that of previous theorem.

	\section{Bivariate neural network operator}\label{product} \label{s4}
	
	Let $\mathrsfso{F}:\Delta \rightarrow \mathbb{R}$ be any function. For $\xi \in \mathrsfso{A}(m), n_1, n_2 \in \mathbb{N}$, we define a bivariate extension $\mathcal{P}_{n_1,n_2,\xi}\mathrsfso{F}$ of operator $\mathcal{S}_{n, \xi}\mathrsfso{F}$ as:\\
	\begin{align}
		(\mathcal{P}_{n_1,n_2,\xi}\mathrsfso{F})(x,y) &=\sum_{k=0}^{n_1} \sum_{l=0}^{n_2} \mathrsfso{F} \left(x_k,y_l \right) \Psi\left(\frac{2mn_1}{a-y}(x-x_k)\right)  \Psi \left(\frac{2mn_2}{a-x}(y-y_l)\right).
	\end{align}
	\begin{theorem} \label{t5}
		Let $\mathrsfso{F}:\Delta\rightarrow \mathbb{R}$ be a bounded and measurable function, $\xi \in \mathrsfso{A}(m)$. Then
		\begin{enumerate}
			\item[(i)] \label{51}
				$(\mathcal{P}_{n_1,n_2,\xi}\mathrsfso{F})=\mathrsfso{F}~on~ \Gamma_3.$
			
			\item[(ii)] \label{52}
			$	(\mathcal{P}_{n_1,n_2,\xi}\mathrsfso{F})(x,0)= (\mathcal{S}_{n_1,\xi}^{x}\mathrsfso{F})(x,0)$ i.e., $(\mathcal{P}_{n_1,n_2,\xi}\mathrsfso{F})$ and  $(\mathcal{S}_{n_1,\xi}^{x}\mathrsfso{F})$ agree on $\Gamma_1$.
			
			\item[(iii)] \label{53}
				$(\mathcal{P}_{n_1,n_2,\xi}\mathrsfso{F})(0,y)= (\mathcal{S}_{n_2,\xi}^{y}\mathrsfso{F})(0,y) $ i.e., $(\mathcal{P}_{n_1,n_2,\xi}\mathrsfso{F})$ and  $(\mathcal{S}_{n_2,\xi}^{y}\mathrsfso{F})$ agree on $\Gamma_2$.
		\end{enumerate}

	\end{theorem}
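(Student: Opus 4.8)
The plan is to establish all three parts by the collapsing-sum device already used in the proof of Theorem~\ref{t1}. The single underlying fact is that for $\xi\in\mathrsfso{A}(m)$ one has $\Psi(0)=\xi(m)-\xi(-m)=1$ and, by the support property (P3) of Lemma~\ref{l1} together with the explicit values $\xi(x)=1$ for $x\ge m$ and $\xi(x)=0$ for $x\le -m$, $\Psi(2mj)=\delta_{0,j}$ for every $j\in\mathbb{Z}$. In each part I substitute the appropriate boundary coordinates into the double sum defining $\mathcal{P}_{n_1,n_2,\xi}\mathrsfso{F}$; the knots $x_k=\tfrac{k}{n_1}(a-y)$ and $y_l=\tfrac{l}{n_2}(a-x)$ are chosen precisely so that one (or both) of the factors $\Psi$ then acquires an argument lying in $2m\mathbb{Z}$, which annihilates all but one index.

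For (i), I would take $(x,y)\in\Gamma_3$, so $x+y=a$ and hence $a-y=x$, $a-x=y$. Then $x_k=\tfrac{k}{n_1}x$, so $\Psi\!\left(\tfrac{2mn_1}{a-y}(x-x_k)\right)=\Psi\!\left(2m(n_1-k)\right)$, which is $1$ for $k=n_1$ and $0$ otherwise; symmetrically $\Psi\!\left(\tfrac{2mn_2}{a-x}(y-y_l)\right)=\Psi\!\left(2m(n_2-l)\right)=\delta_{n_2,l}$. Only the term $k=n_1$, $l=n_2$ survives, and since $x_{n_1}=a-y=x$ and $y_{n_2}=a-x=y$ this term is $\mathrsfso{F}(x,y)$.

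For (ii), I would put $y=0$. Then $y_l=\tfrac{l}{n_2}(a-x)$, so the second factor becomes $\Psi(-2ml)=\delta_{0,l}$, leaving only the $l=0$ slice; since $y_0=0$ this slice is exactly $\sum_{k=0}^{n_1}\mathrsfso{F}(x_k,0)\,\Psi\!\left(\tfrac{2mn_1}{a}(x-x_k)\right)=(\mathcal{S}_{n_1,\xi}^{x}\mathrsfso{F})(x,0)$. Part (iii) is the mirror image: set $x=0$, observe that the first factor becomes $\Psi(-2mk)=\delta_{0,k}$, retain only $k=0$, and use $x_0=0$ to recognise the remaining sum as $(\mathcal{S}_{n_2,\xi}^{y}\mathrsfso{F})(0,y)$.

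It remains to treat the two vertices $(0,a)$ and $(a,0)$, where the weights $\tfrac{2mn_1}{a-y}$ and $\tfrac{2mn_2}{a-x}$ are singular; there I would read $\mathcal{P}_{n_1,n_2,\xi}\mathrsfso{F}$ through the same conventions as in \eqref{bmx} and \eqref{bmy} (equivalently as the limit of the generic expression), which returns $\mathrsfso{F}(0,a)$ and $\mathrsfso{F}(a,0)$ respectively and is consistent with (i)--(iii). No step is genuinely hard here: the only mild care is in tracking which of the two $\Psi$-factors collapses in each case — and in handling those degenerate corners — while everything else is the bookkeeping of the identity $\Psi(2mj)=\delta_{0,j}$.
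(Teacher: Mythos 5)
Your proof is correct and follows essentially the same route as the paper: substitute the boundary coordinates, observe that the relevant $\Psi$-factor takes arguments in $2m\mathbb{Z}$ so that $\Psi(2mj)=\delta_{0,j}$ collapses the double sum to a single term (for $\Gamma_3$) or a single slice (for $\Gamma_1$, $\Gamma_2$). Your explicit treatment of the degenerate vertices $(0,a)$ and $(a,0)$ is a small extra care that the paper's proof silently omits, but it does not change the argument.
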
 
	\begin{proof}
	\begin{enumerate}
		\item [(i)]	Let $(\tilde{x},\tilde{y})$ $\in \Gamma _3$ be arbitrary. Using the property (P3) in lemma \ref{l1}, we have
		\begin{align*}
			(\mathcal{P}_{n_1,n_2,\xi}\mathrsfso{F})(\tilde{x},\tilde{y}) &=\sum_{k=0}^{n_1} \sum_{l=0}^{n_2} \mathrsfso{F} (x_k,y_l) \Psi\left(\frac{2mn_1}{a-\tilde{y}}(\tilde{x}-x_k)\right) \Psi \left(\frac{2mn_2}{a-\tilde{x}}(\tilde{y}-y_l)\right) \\
			&=\sum_{k=0}^{n_1} \sum_{l=0}^{n_2} \mathrsfso{F} (x_k,y_l) \Psi\left(2m(n_1 -k) \right) \Psi \left( 2m(n_2 -l)\right) \\
			&= \mathrsfso{F}(x_{n_1}, y_{n_2})\\
			&= \mathrsfso{F}(\tilde{x}, \tilde{y}).
		\end{align*}
			Hence, we have proved that\\ $(\mathcal{P}_{n_1,n_2,\xi}\mathrsfso{F})(\tilde{x}, \tilde{y})=\mathrsfso{F}(\tilde{x}, \tilde{y})$ on $ \Gamma _3$.\\
			\item [(ii)] Now,
			\begin{align*}
					(\mathcal{P}_{n_1,n_2,\xi}\mathrsfso{F})(x,0) &=\sum_{k=0}^{n_1} \sum_{l=0}^{n_2} \mathrsfso{F} (x_k,y_l) \Psi\left(\frac{2mn_1}{a}(x-x_k)\right) \Psi \left(\frac{2mn_2}{a-x}(0-y_l)\right) \\
				&=\sum_{k=0}^{n_1} \sum_{l=0}^{n_2} \mathrsfso{F} (x_k,y_l) \Psi\left(\frac{2mn_1}{a}\left(x-x_k\right)\right) \Psi \left( 2m(-l)\right) \\
				&=\sum_{k=0}^{n_1} \mathrsfso{F} (x_k,0) \Psi\left(\frac{2mn_1}{a}\left(x-x_k\right)\right) \\
				&= (\mathcal{S}_{n_1,\xi}^{x}\mathrsfso{F})(x,0).
			\end{align*}
	\end{enumerate}
		Similarly, we can show that
		\begin{equation*}
			 (\mathcal{P}_{n_1,n_2,\xi}\mathrsfso{F})(0, y)=(\mathcal{S}_{n_2,\xi}^{y}\mathrsfso{F})(0,y).
		\end{equation*}
	\end{proof}
	
	\begin{lemma}\label{Lemma3}
		For any $(x,y) \in  \Delta $, we have
		\begin{equation}
			\sum_{k=0}^{n_1} \sum_{l=0}^{n_2}\Psi\left(\frac{2mn_1}{a-y}(x-x_k)\right)\Psi\left(\frac{2mn_2}{a-x}(y-y_l)\right)=1.
		\end{equation}
	\end{lemma}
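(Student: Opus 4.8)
The plan is to reduce the double sum to a product of two single sums and then apply Lemma~\ref{l2} twice. Fix $(x,y)\in\Delta$. If $(x,y)$ is one of the exceptional vertices (so that one of the scaling factors $\frac{2mn_1}{a-y}$ or $\frac{2mn_2}{a-x}$ is undefined), the corresponding operator is defined separately and the identity is checked directly at that point; so assume $x+y<a$, i.e.\ both $a-y>0$ and $a-x>0$. The key observation is that, for fixed $(x,y)$, the factor $\Psi\!\left(\frac{2mn_1}{a-y}(x-x_k)\right)$ depends only on $k$ (with $x_k=\frac{k}{n_1}(a-y)$) and the factor $\Psi\!\left(\frac{2mn_2}{a-x}(y-y_l)\right)$ depends only on $l$ (with $y_l=\frac{l}{n_2}(a-x)$). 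Hence the double sum factors:
\begin{equation*}
\sum_{k=0}^{n_1}\sum_{l=0}^{n_2}\Psi\!\left(\tfrac{2mn_1}{a-y}(x-x_k)\right)\Psi\!\left(\tfrac{2mn_2}{a-x}(y-y_l)\right)
=\left(\sum_{k=0}^{n_1}\Psi\!\left(\tfrac{2mn_1}{a-y}(x-x_k)\right)\right)\left(\sum_{l=0}^{n_2}\Psi\!\left(\tfrac{2mn_2}{a-x}(y-y_l)\right)\right).
\end{equation*}

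Next I would identify each single sum as an instance of the partition-of-unity property already recorded in Lemma~\ref{l2}. Indeed, the first parenthesized sum is exactly $(\mathcal{S}_{n_1,\xi}^{x}\mathbf{1})(x,y)$, which equals $1$ by Lemma~\ref{l2}. For the second sum I would either invoke the analogue of Lemma~\ref{l2} for $\mathcal{S}_{n_2,\xi}^{y}$ (stated as running parallel to Lemma~\ref{l2}), or re-derive it in one line: with $z=\frac{2mn_2}{a-x}\,y\in[0,2mn_2]$ one has $\sum_{l=0}^{n_2}\Psi(z-2ml)=1$ by telescoping the identity $\Psi(t)+\Psi(t-2m)=1$ on $[0,2m]$ (property P4 of Lemma~\ref{l1}), together with the support condition P3 which kills all but the two consecutive nonzero terms. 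Multiplying the two values gives $1\cdot 1=1$, which is the claim.

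The only genuine subtlety — and hence the step I would write most carefully — is the justification that the partition-of-unity identity for the univariate-type operator $\mathcal{S}^{y}_{n_2,\xi}$ holds pointwise on the full closed triangle, including the degenerate boundary behaviour: near the vertex $(a,0)$ the factor $a-x\to 0$, and one must check that the separate definition of the operator there is consistent with the limit, so that the stated identity is literally true for \emph{every} $(x,y)\in\Delta$ as claimed. This is handled exactly as in the proof of Lemma~\ref{l2} and in Theorems~\ref{t1}--\ref{t2}: at such a vertex the node collapse $x_k\to$ a single point forces the sum to reduce to a single surviving $\Psi$-value equal to $1$. Everything else is the routine factorization and two applications of an already-established lemma.
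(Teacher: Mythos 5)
Your proposal is correct and follows essentially the route the paper intends: the paper simply remarks that the lemma ``can be proved by using similar arguments as in Lemma~\ref{l2}'', and your factorization of the double sum into the product $\left(\sum_{k}\Psi\right)\left(\sum_{l}\Psi\right)$ followed by two applications of the univariate partition-of-unity identity is exactly that argument made explicit. Your additional care about the degenerate vertices is a reasonable refinement that the paper glosses over.
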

\noindent	This lemma can be proved by using similar arguments as in lemma \ref*{l2}.
	\begin{definition}
		Let $C(\mathbf{X})$ be the set of all continuous functions $f:\mathbf{X}\rightarrow \mathbb{R}$ defined on the compact set $\mathbf{X}$, the modulus of continuity of f is defined by
		\begin{equation}
			\omega(f,\delta_1, \delta_2)=\sup_{(x,y) ~\in \mathbf{X}, ~|h|\leq\delta_1,~ |k|\leq\delta_2}|f(x+h,y+k)-f(x,y)|.
		\end{equation}
	\end{definition}
\noindent Now, we will compute the estimate for the error of approximation  corresponding to the operator  $\mathcal{P}_{n_1, n_2,\xi}\mathrsfso{F}$ in terms of modulus of continuity.\\

	\begin{theorem}
		Let $\mathrsfso{F} \in C(\Delta)$, then we have\\
		\begin{equation}\label{3}
			\|\mathrsfso{F}-(\mathcal{P}_{n_1,n_2,\xi}\mathrsfso{F})\| \leq \omega (\mathrsfso{F},h_1, h_2),
		\end{equation}
		where $h_1 = \frac{a}{n_1}$ and $h_2 = \frac{a}{n_2}.$
	\end{theorem}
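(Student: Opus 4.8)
The plan is to mirror the univariate argument for $\mathcal{S}_{n_1,\xi}^{x}$, now using the bivariate partition of unity from Lemma \ref{Lemma3} together with the compact support property (P3) of Lemma \ref{l1}. First I would fix an arbitrary $(x,y)\in\Delta$ with $a-y>0$ and $a-x>0$; the two remaining points $(0,a)$ and $(a,0)$ lie on $\Gamma_3$, where $\mathcal{P}_{n_1,n_2,\xi}\mathrsfso{F}=\mathrsfso{F}$ by Theorem \ref{t5}(i), so the estimate is trivial there. For this fixed point the nodes $x_k=\frac{k}{n_1}(a-y)$, $k=0,\dots,n_1$, form a uniform partition of $[0,a-y]$ with mesh $\frac{a-y}{n_1}\le h_1$, and the $y_l=\frac{l}{n_2}(a-x)$, $l=0,\dots,n_2$, partition $[0,a-x]$ with mesh $\frac{a-x}{n_2}\le h_2$. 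Pick indices $i,j$ with $x\in[x_i,x_{i+1}]$ and $y\in[y_j,y_{j+1}]$.

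Next I would apply (P3): $\Psi\!\left(\frac{2mn_1}{a-y}(x-x_k)\right)=0$ for $k\notin\{i,i+1\}$ and $\Psi\!\left(\frac{2mn_2}{a-x}(y-y_l)\right)=0$ for $l\notin\{j,j+1\}$, so the double sum defining $(\mathcal{P}_{n_1,n_2,\xi}\mathrsfso{F})(x,y)$ reduces to the four terms with $(k,l)\in\{i,i+1\}\times\{j,j+1\}$. Writing $\mathrsfso{F}(x,y)=\sum_{k=0}^{n_1}\sum_{l=0}^{n_2}\mathrsfso{F}(x,y)\,\Psi(\cdot)\Psi(\cdot)$ by Lemma \ref{Lemma3} and subtracting, and noting that each factor $\Psi(\cdot)\ge 0$,
\[
\big|(\mathcal{P}_{n_1,n_2,\xi}\mathrsfso{F})(x,y)-\mathrsfso{F}(x,y)\big|\le\sum_{k=i}^{i+1}\sum_{l=j}^{j+1}\big|\mathrsfso{F}(x_k,y_l)-\mathrsfso{F}(x,y)\big|\,\Psi\!\left(\tfrac{2mn_1}{a-y}(x-x_k)\right)\Psi\!\left(\tfrac{2mn_2}{a-x}(y-y_l)\right).
\]

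Finally I would bound each surviving difference. For $k\in\{i,i+1\}$ we have $|x-x_k|\le x_{i+1}-x_i=\frac{a-y}{n_1}\le h_1$, and for $l\in\{j,j+1\}$ we have $|y-y_l|\le\frac{a-x}{n_2}\le h_2$; since $(x,y)$ and the nodes $(x_k,y_l)$ all lie in $\Delta$, this yields $|\mathrsfso{F}(x_k,y_l)-\mathrsfso{F}(x,y)|\le\omega(\mathrsfso{F},h_1,h_2)$ for each of the four pairs. Factoring this constant out and invoking Lemma \ref{Lemma3} once more — which, after discarding the zero terms by (P3), reads $\sum_{k=i}^{i+1}\sum_{l=j}^{j+1}\Psi(\cdot)\Psi(\cdot)=1$ — the right-hand side is at most $\omega(\mathrsfso{F},h_1,h_2)$. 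Taking the supremum over $(x,y)\in\Delta$ gives (\ref{3}). There is no genuine obstacle here: the only points needing care are verifying that the discarded $\Psi$-terms vanish (immediate from (P3)) and that the node spacings are controlled by $h_1,h_2$ uniformly in $(x,y)$ (immediate from $0\le y<a$ and $0\le x<a$), after which the argument is structurally identical to the univariate estimate already established.
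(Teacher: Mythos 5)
Your proposal is correct and follows essentially the same route as the paper: expand $\mathrsfso{F}(x,y)$ via the partition of unity of Lemma \ref{Lemma3}, use (P3) to reduce the double sum to the four terms with $(k,l)\in\{i,i+1\}\times\{j,j+1\}$, bound each difference by $\omega(\mathrsfso{F},h_1,h_2)$, and sum the surviving $\Psi$-products to $1$. If anything, you are slightly more careful than the paper in treating the corner points $(0,a)$, $(a,0)$ and in justifying that the mesh widths $\frac{a-y}{n_1}$, $\frac{a-x}{n_2}$ are uniformly bounded by $h_1$, $h_2$.
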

	\begin{proof}
		Assume that $x$ lies between $x_i$ and $x_{i+1}$, and $y$ lies between  $y_j$ and $y_{j+1}$. Using lemma \ref*{Lemma3} and  properties P3 and P4 of lemma \ref{l1}, we have 
		\begin{align*}
			&|\mathrsfso{F}(x,y)-(\mathcal{P}_{n_1,n_2,\xi}\mathrsfso{F})(x,y)|\\
			&=\bigg|\sum_{k=0}^{n_1} \sum_{l=0}^{n_2} \left(\mathrsfso{F}(x,y)- \mathrsfso{F}(x_k,y_l)\right)\Psi\left(\frac{2mn_1}{a-y}(x-x_k)\right)  \Psi\left(\frac{2mn_2}{a-x}(y-y_l)\right)\bigg| \\
			&= \sum_{k=i,i+1}\sum_{l=j,j+1} \bigg| \mathrsfso{F}(x,y)- \mathrsfso{F} (x_k,y_l) \bigg|\Psi\left(\frac{2mn_1}{a-y}(x-x_k)\right)  \Psi\left(\frac{2mn_2}{a-x}(y-y_l)\right)\\
			&\leq \omega(\mathrsfso{F},h_1,h_2)\Bigg( \Psi\left(\frac{2mn_1}{a-y}(x-x_{i})\right)  \Psi\left(\frac{2mn_2}{a-x}(y-y_{j})\right)\\
			 &\quad +\Psi\left(\frac{2mn_1}{a-y}(x-x_{i+1})\right)  \Psi\left(\frac{2mn_2}{a-x}(y-y_{j})\right)\\
			 &\quad +\Psi\left(\frac{2mn_1}{a-y}(x-x_{i})\right)  \Psi\left(\frac{2mn_2}{a-x}(y-y_{j+1})\right)\\ &\quad +\Psi\left(\frac{2mn_1}{a-y}(x-x_{i+1})\right)  \Psi\left(\frac{2mn_2}{a-x}(y-y_{j+1})\right) \Bigg)\\
			&=\omega(\mathrsfso{F},h_1,h_2)\Bigg(\Psi\left(\frac{2mn_1}{a-y}(x-x_{i})\right)\left( \Psi\left(\frac{2mn_2}{a-x}(y-y_{j})\right)+\Psi\left(\frac{2mn_2}{a-x}(y-y_{j+1})\right) \right)\\
			& \quad+ \Psi\left(\frac{2mn_1}{a-y}(x-x_{i+1})\right)\left( \Psi\left(\frac{2mn_2}{a-x}(y-y_{j})\right)+\Psi\left(\frac{2mn_2}{a-x}(y-y_{j+1})\right) \right) \Bigg)\\
			&=\omega(\mathrsfso{F},h_1,h_2)\Bigg(\Psi\left(\frac{2mn_1}{a-y}(x-x_{i})\right)+\Psi\left(\frac{2mn_1}{a-y}(x-x_{i+1})\right)\Bigg)\\
			&=\omega(\mathrsfso{F},h_1,h_2).
		\end{align*}
	\end{proof}
	\section{Generalized Boolean sum neural network operator} \label{s5}
	In this section, we shall introduce the neural network version of Generalized Boolean Sum operators for B\"{o}gel continuous (B-continuous) functions.
	In 1934, B\"{o}gel \cite{Bögel1934} introduced the concept of B-continuous and B-differentiable functions. For detailed literature, one can refer to \cite{Bögel1962}, \cite{Bögel1934}, and \cite{Bögel1935}. Recently several authors have contributed in this direction.  Badea and Cottin \cite{badea1990korovkin} proved the Korovkin type theorems for GBS (Generalized Boolean Sum) operators, S.A. Mohiuddine \cite{SAM}  constructed the bivariate form of Bernstein–Schurer type operator and gave the associated GBS operator, Q.B. Cai et al  introduced GBS operators of $q$-Bernstein-Kantorovich Type \cite{cai} and $\lambda$-Bernstein-Kantorovich Type \cite{cai2}, Ansari et al \cite{articlekj} used a summability method and constructed Bernstein-Kantorovich operatorsin two variables and related GBS operators, Khan et al \cite{KHAN20215909} constructed Boolean Sum operators based on quantum analogue.\\
	
	A  brief introduction of B\"{o}gel continuous functions is as follows:
	\begin{definition}
		Let $f:\mathbf{X}\rightarrow \mathbb{R}$ be any function, where $\mathbf{X}$ is a compact subset of $\mathbb{R}^2$, the mixed difference $\Lambda_{(s,t)} f(x,y)$ is defined as:
		\begin{equation*}
			\Lambda_{(s,t)} f(x,y)= f(x,y)-f(s,y)-f(x,t)+f(s,t).
		\end{equation*}
		If $\displaystyle\lim_{(x,y)\rightarrow (s,t)} \Lambda_{(s,t)}f(x,y)=0$, then $f$ is said to be B\"{o}gel continuous on the compact domain $\mathbf{X}$.
	\end{definition}
\noindent	Denote the space of all B\"{o}gel continuous functions defined on compact domain $\mathbf{X}$ by $C_{B}(\mathbf{X})$.\\
	Now, we shall introduce the Generalized Boolean Sum Neural Network Operator as follows:\\
	Let $\mathrsfso{F} \in C_{B}(\Delta)$, the Generalized Boolean Sum Neural Network Operator $\mathcal{B}_{n_1, n_2, \xi}$ is defined as
	\begin{align*}
		\mathcal{B}_{n_1, n_2, \xi}=\mathcal{S}_{n_1,\xi}^{x}+\mathcal{S}_{n_2,\xi}^{y}-\mathcal{P}_{n_1, n_2, \xi}.
	\end{align*}
	
	\begin{theorem}
		Let  $\mathrsfso{F} \in C_{B}(\Delta)$, $\xi \in \mathrsfso{A}(m)$. Then
		\begin{equation*}
			(\mathcal{B}_{n_1, n_2, \xi}\mathrsfso{F})(\tilde{x},\tilde{y})=\mathrsfso{F}(\tilde{x},\tilde{y})~~~~\forall~~ (\tilde{x},\tilde{y}) \in  \partial\Delta.
		\end{equation*}
		
	\end{theorem}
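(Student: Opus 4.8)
The plan is to prove the interpolation property on each of the three edges $\Gamma_1$, $\Gamma_2$, $\Gamma_3$ separately, using the results already established for the three constituent operators $\mathcal{S}_{n_1,\xi}^{x}$, $\mathcal{S}_{n_2,\xi}^{y}$, and $\mathcal{P}_{n_1,n_2,\xi}$. Since $\partial\Delta = \Gamma_1 \cup \Gamma_2 \cup \Gamma_3$, it suffices to handle a generic point $(\tilde x,\tilde y)$ in each edge and show $(\mathcal{B}_{n_1,n_2,\xi}\mathrsfso{F})(\tilde x,\tilde y) = \mathrsfso{F}(\tilde x,\tilde y)$ by expanding the definition $\mathcal{B}_{n_1,n_2,\xi} = \mathcal{S}_{n_1,\xi}^{x} + \mathcal{S}_{n_2,\xi}^{y} - \mathcal{P}_{n_1,n_2,\xi}$ and substituting the known values of each term.

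First I would take $(\tilde x,\tilde y) \in \Gamma_3$. By Theorem~\ref{t1}, $(\mathcal{S}_{n_1,\xi}^{x}\mathrsfso{F})(\tilde x,\tilde y) = \mathrsfso{F}(\tilde x,\tilde y)$; by Theorem~\ref{t2}, $(\mathcal{S}_{n_2,\xi}^{y}\mathrsfso{F})(\tilde x,\tilde y) = \mathrsfso{F}(\tilde x,\tilde y)$; and by Theorem~\ref{t5}(i), $(\mathcal{P}_{n_1,n_2,\xi}\mathrsfso{F})(\tilde x,\tilde y) = \mathrsfso{F}(\tilde x,\tilde y)$. Hence $(\mathcal{B}_{n_1,n_2,\xi}\mathrsfso{F})(\tilde x,\tilde y) = \mathrsfso{F} + \mathrsfso{F} - \mathrsfso{F} = \mathrsfso{F}(\tilde x,\tilde y)$. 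Next, for $(\tilde x,\tilde y) \in \Gamma_1$ (so $\tilde x = 0$): Theorem~\ref{t2} gives $(\mathcal{S}_{n_2,\xi}^{y}\mathrsfso{F})(\tilde x,\tilde y) = \mathrsfso{F}(\tilde x,\tilde y)$ since $\Gamma_1 \subseteq \Gamma_1\cup\Gamma_3$, and Theorem~\ref{t5}(iii) gives $(\mathcal{P}_{n_1,n_2,\xi}\mathrsfso{F})(0,y) = (\mathcal{S}_{n_2,\xi}^{y}\mathrsfso{F})(0,y)$, so these two terms cancel, leaving $(\mathcal{B}_{n_1,n_2,\xi}\mathrsfso{F})(0,y) = (\mathcal{S}_{n_1,\xi}^{x}\mathrsfso{F})(0,y)$. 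But $\Gamma_1$ as defined is $\{x=0\}$, which is $\Gamma_2$ in the notation of Theorem~\ref{t1}; reading carefully, Theorem~\ref{t1} states $\mathcal{S}_{n_1,\xi}^{x}\mathrsfso{F} = \mathrsfso{F}$ on $\Gamma_2\cup\Gamma_3$, and the figure/labels indicate $\Gamma_2 = \{x=0\}$, so the residual term $(\mathcal{S}_{n_1,\xi}^{x}\mathrsfso{F})(0,y)$ equals $\mathrsfso{F}(0,y)$. The symmetric argument on $\Gamma_2$ (the edge $\{y=0\}$) uses Theorem~\ref{t5}(ii) to cancel $\mathcal{S}_{n_1,\xi}^{x}$ against $\mathcal{P}_{n_1,n_2,\xi}$, leaving $(\mathcal{S}_{n_2,\xi}^{y}\mathrsfso{F})(x,0) = \mathrsfso{F}(x,0)$ by Theorem~\ref{t2}.

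The main obstacle is purely bookkeeping: the paper's labelling of $\Gamma_1,\Gamma_2$ in the edge definitions versus in the figure and in Theorems~\ref{t1}–\ref{t2} is potentially inconsistent, so I would need to pin down which edge is which and invoke exactly the right one of Theorems~\ref{t1}, \ref{t2}, \ref{t5} for each case. Once the correspondence is fixed, each of the three cases is a one-line cancellation: on $\Gamma_3$ all three operators reproduce $\mathrsfso{F}$; on each of the other two edges, $\mathcal{P}_{n_1,n_2,\xi}$ agrees with whichever single-variable operator it must be subtracted from (Theorem~\ref{t5}(ii)–(iii)), so those two terms annihilate and the remaining single-variable operator reproduces $\mathrsfso{F}$ on that edge by Theorem~\ref{t1} or \ref{t2}. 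No estimates or modulus-of-continuity arguments are needed; the B\"ogel continuity hypothesis is only there to guarantee $\mathcal{B}_{n_1,n_2,\xi}\mathrsfso{F}$ is well defined, and boundedness/measurability of $\mathrsfso{F}$ on $\Delta$ (which follows from $\mathrsfso{F}\in C_B(\Delta)$) supplies the hypotheses of the cited theorems.
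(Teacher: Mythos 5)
Your proof is correct and takes essentially the same route as the paper: on each leg you cancel $\mathcal{P}_{n_1,n_2,\xi}$ against the matching parametric operator via Theorem~\ref{t5}(ii)--(iii) and finish with Theorem~\ref{t1} or Theorem~\ref{t2}, while on $\Gamma_3$ all three operators reproduce $\mathrsfso{F}$; you also correctly resolve the paper's inconsistent $\Gamma_1/\Gamma_2$ labelling in favour of the convention actually used in Theorems~\ref{t1} and~\ref{t2}. The only blemish is the unused side remark that Theorem~\ref{t2} already gives $(\mathcal{S}^{y}_{n_2,\xi}\mathrsfso{F})(0,y)=\mathrsfso{F}(0,y)$ on the edge $x=0$ --- under the working convention that is not an edge on which $\mathcal{S}^{y}_{n_2,\xi}$ interpolates, but your cancellation argument does not rely on that claim.
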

	\begin{proof}
		Since
		\begin{equation*}
			\mathcal{B}_{n_1,n_2}\mathrsfso{F}=(\mathcal{S}_{n_1,\xi}^{x}+\mathcal{S}_{n_2,\xi}^{y}-\mathcal{P}_{n_1, n_2, \xi})\mathrsfso{F}
		\end{equation*}
Let $(x,y)$ $\in \Gamma _1$ be arbitrary, we have
$y=0$. Using Theorem \ref{52} (ii) and Theorem \ref{t2}		
\begin{align*}
	(\mathcal{B}_{n_1,n_2}\mathrsfso{F})(x,0)&=(\mathcal{S}_{n_1,\xi}^{x}\mathrsfso{F})(x,0)+(\mathcal{S}_{n_2,\xi}^{y}\mathrsfso{F})(x,0)-(\mathcal{P}_{n_1, n_2, \xi}\mathrsfso{F})(x,0)\\
	&=(\mathcal{S}_{n_1,\xi}^{x}\mathrsfso{F})(x,0)+(\mathcal{S}_{n_2,\xi}^{y}\mathrsfso{F})(x,0)-(\mathcal{S}_{n_1,\xi}^{x}\mathrsfso{F})(x,0)\\
	&=(\mathcal{S}_{n_2,\xi}^{y}\mathrsfso{F})(x,0)\\
	&=\mathrsfso{F}(x,0).
\end{align*}
Hence we have proved that $\mathcal{B}_{n_1,n_2}\mathrsfso{F}=\mathrsfso{F}\quad \text{on}~ \Gamma_1.$

\noindent Similarly, using Theorem \ref{52} (iii) and Theorem \ref{t1}, we can prove that $\mathcal{B}_{n_1,n_2}\mathrsfso{F}=\mathrsfso{F}~ \text{on}~ \Gamma_2.$

For $(x,y)\in \Gamma _3$, the interpolation properties of $(\mathcal{S}_{n_1,\xi}^{x}\mathrsfso{F}),~(\mathcal{S}_{n_2,\xi}^{y}\mathrsfso{F}),~ and~ (\mathcal{P}_{n_1, n_2, \xi}\mathrsfso{F})$ on $\Gamma_3$ imply that $\mathcal{B}_{n_1,n_2}\mathrsfso{F}=\mathrsfso{F}$ on $\Gamma_3$		
	\end{proof}
	\begin{definition}
		Let $C_{B}(\mathbf{X})$ be the set of all B-continuous functions $f:\mathbf{X}\rightarrow \mathbb{R}$ defined on the compact set $\mathbf{X}\subset\mathbb{R}^2$, the mixed modulus of continuity \cite{Anastassiou2000, GONSKA1990170} of f is defined by
		\begin{equation}
			\omega_{mixed}(f,\delta)=\sup_{(x,y) \in \mathbf{X}, ~|h|\leq\delta,~ |k|\leq\delta}|f(x+h,y)+f(x,y+k)-f(x+h,y+k)-f(x,y)|.
		\end{equation}
	\end{definition}
\noindent	Now, we will compute the estimate for the error of approximation  corresponding to the operator  $\mathcal{B}_{n_1, n_2,\xi}^{x}\mathrsfso{F}$ in terms of mixed modulus of continuity.\\
	\begin{theorem}
		Let  $\mathrsfso{F} \in C_{B}(\Delta)$, $\xi \in \mathrsfso{A}(m)$. Then
		\begin{equation*}
			\|\mathcal{B}_{n_1, n_2, \xi}-\mathrsfso{F}\|\leq \omega_{mixed}(f,h_1,h_2).
		\end{equation*}
	\end{theorem}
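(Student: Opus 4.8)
The plan is to exploit the Boolean-sum structure $\mathcal{B}_{n_1,n_2,\xi} = \mathcal{S}_{n_1,\xi}^{x} + \mathcal{S}_{n_2,\xi}^{y} - \mathcal{P}_{n_1,n_2,\xi}$ together with the fact that $\mathcal{P}_{n_1,n_2,\xi}$ is (essentially) the tensor product of the two univariate-type operators. First I would fix an arbitrary $(x,y)\in\Delta$ and write out $(\mathcal{B}_{n_1,n_2,\xi}\mathrsfso{F})(x,y)-\mathrsfso{F}(x,y)$ as a double sum: using Lemma~\ref{l2} applied in each variable and Lemma~\ref{Lemma3}, the constant $\mathrsfso{F}(x,y)$ can be absorbed into the sums, so that
\begin{align*}
	&(\mathcal{B}_{n_1,n_2,\xi}\mathrsfso{F})(x,y)-\mathrsfso{F}(x,y)\\
	&= \sum_{k=0}^{n_1}\sum_{l=0}^{n_2}\Big(\mathrsfso{F}(x_k,y)+\mathrsfso{F}(x,y_l)-\mathrsfso{F}(x_k,y_l)-\mathrsfso{F}(x,y)\Big)\,\Psi\!\left(\tfrac{2mn_1}{a-y}(x-x_k)\right)\Psi\!\left(\tfrac{2mn_2}{a-x}(y-y_l)\right).
\end{align*}
The bracketed quantity is precisely a mixed difference $\Lambda_{(x_k,y)}\mathrsfso{F}(x,y_l)$ (up to sign), which is exactly the object controlled by $\omega_{mixed}$.

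Next I would localize the double sum. As in the proof for $\mathcal{P}_{n_1,n_2,\xi}$, assume $x\in[x_i,x_{i+1}]$ and $y\in[y_j,y_{j+1}]$; by property (P3) of Lemma~\ref{l1} only the indices $k\in\{i,i+1\}$ and $l\in\{j,j+1\}$ contribute, so the sum reduces to four terms. For each surviving $(k,l)$ the increments are $|x-x_k|\le h_1$ and $|y-y_l|\le h_2$ (with $h_1=a/n_1$, $h_2=a/n_2$, or the sharper $y$-dependent and $x$-dependent spacings, matching the statement up to the choice of $h_1,h_2$), so each mixed difference is bounded in absolute value by $\omega_{mixed}(\mathrsfso{F},h_1,h_2)$. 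Pulling this bound out, what remains is
\[
	\sum_{k=i,i+1}\sum_{l=j,j+1}\Psi\!\left(\tfrac{2mn_1}{a-y}(x-x_k)\right)\Psi\!\left(\tfrac{2mn_2}{a-x}(y-y_l)\right),
\]
which factors as a product of two sums; each factor equals $1$ by property (P4) of Lemma~\ref{l1} (the partition-of-unity identity on two consecutive nodes, exactly as used in the error estimate for $\mathcal{P}_{n_1,n_2,\xi}$). Hence the whole expression is bounded by $\omega_{mixed}(\mathrsfso{F},h_1,h_2)$, and taking the supremum over $(x,y)\in\Delta$ gives the claim.

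The one point that needs care — and which I expect to be the main obstacle — is the bookkeeping that shows the bracketed coefficient really is a single mixed difference after the constant $\mathrsfso{F}(x,y)$ is distributed across the three operators. One must check that $\mathcal{S}_{n_1,\xi}^{x}$ contributes $\sum_k \mathrsfso{F}(x_k,y)\Psi(\cdots)$ while simultaneously its "hidden" $y$-sum (inserted via $\sum_l \Psi=1$ from Lemma-type identities) is consistent with the $y$-nodes $y_l$ used in $\mathcal{P}_{n_1,n_2,\xi}$, so that the three double sums share a common index set and common weights; only then do the four function values combine into $\Lambda$. A secondary subtlety is the node spacing: since $x_k=\frac{k}{n_1}(a-y)$ depends on $y$ and $y_l=\frac{l}{n_2}(a-x)$ depends on $x$, the genuine local mesh sizes are $\frac{a-y}{n_1}\le h_1$ and $\frac{a-x}{n_2}\le h_2$ on $\Delta$, so the stated bound with $h_1=a/n_1$, $h_2=a/n_2$ follows a fortiori from monotonicity of $\omega_{mixed}$ in its arguments. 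Once these are dispatched, the remaining estimates are routine and mirror the earlier proof for $\mathcal{P}_{n_1,n_2,\xi}$ verbatim.
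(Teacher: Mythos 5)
Your proposal is correct and follows essentially the same route as the paper: rewrite $\mathcal{B}_{n_1,n_2,\xi}\mathrsfso{F}-\mathrsfso{F}$ as a double sum whose coefficients are mixed differences (using the partition-of-unity Lemmas to absorb $\mathrsfso{F}(x,y)$ and to lift the two univariate sums to the common double index set), localize to the four indices $k=i,i+1$, $l=j,j+1$ via (P3), bound each mixed difference by $\omega_{mixed}$, and collapse the remaining weights to $1$ via (P4). The bookkeeping and mesh-size points you flag are handled implicitly in the paper in exactly the way you describe.
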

	\begin{proof}
		Assume that $x$ lies between $x_i$ and $x_{i+1}$, and $y$ lies between  $y_j$ and $y_{j+1}$. Using lemma (\ref*{Lemma3}) and  properties P3 and P4 of lemma \ref{l1}, we have 
		\begin{align*}
			&|(\mathcal{B}_{n_1, n_2, \xi})(x,y)-\mathrsfso{F}(x,y)|\\
			&\scalebox{0.92}{$=\bigg|\displaystyle\sum_{k=0}^{n_1} \sum_{l=0}^{n_2} \left(\mathrsfso{F}(x_k,y)+\mathrsfso{F}(x,y_l)-\mathrsfso{F}(x_k,y_l)\right)\Psi\left(\frac{2mn_1}{a-y}(x-x_k)\right)\Psi\left(\frac{2mn_2}{a-x}(y-y_l)\right)-\mathrsfso{F}(x,y)\bigg| $}\\
			&\leq\scalebox{0.90}{$\displaystyle\sum_{k=i,i+1}\sum_{l=j,j+1} \bigg|\mathrsfso{F}(x_k,y)+\mathrsfso{F}(x,y_l)-\mathrsfso{F}(x_k,y_l)-\mathrsfso{F}(x,y)\bigg|\Psi\left(\frac{2mn_1}{a-y}(x-x_k)\right)  \Psi\left(\frac{2mn_2}{a-x}(y-y_l)\right)$}\\
			&\scalebox{0.95}{$\leq \omega_{mixed}(\mathrsfso{F},h_1,h_2)\Bigg( \Psi\left(\frac{2mn_1}{a-y}(x-x_{i})\right)  \Psi\left(\frac{2mn_2}{a-x}(y-y_{j})\right)$} \scalebox{0.95}{$+\Psi\left(\frac{2mn_1}{a-y}(x-x_{i+1})\right)  \Psi\left(\frac{mn_2}{a-y}(y-y_{j})\right)$}\\ &\quad \scalebox{0.95}{$+\Psi\left(\frac{2mn_1}{a-y}(x-x_{i})\right)  \Psi\left(\frac{2mn_2}{a-x}(y-y_{j+1})\right)$} +\Psi\left(\frac{2mn_1}{a-y}(x-x_{i+1})\right)  \Psi\left(\frac{2mn_2}{a-x}(y-y_{j+1})\right) \Bigg)\\
			&\scalebox{0.95}{$=\omega_{mixed}(\mathrsfso{F},h_1,h_2)\Bigg(\Psi\left(\frac{2mn_1}{a-y}(x-x_{i})\right)\left( \Psi\left(\frac{2mn_2}{a-x}(y-y_{j})\right)+\Psi\left(\frac{2mn_2}{a-x}(y-y_{j+1})\right) \right)$}\\
			& \quad \scalebox{0.95}{$+ \Psi\left(\frac{2mn_1}{a-y}(x-x_{i})\right)\left( \Psi\left(\frac{2mn_2}{a-x}(y-y_{j})\right)+\Psi\left(\frac{2mn_2}{a-x}(y-y_{j+1})\right) \right) \Bigg)$}\\
			&\scalebox{0.95}{$=\omega_{mixed}(\mathrsfso{F},h_1,h_2)\Bigg(\Psi\left(\frac{2mn_1}{a-y}(x-x_{i})\right)+\Psi\left(\frac{2mn_1}{a-y}(x-x_{i+1})\right)\Bigg)$}\\
			&\scalebox{0.95}{$=\omega_{mixed}(\mathrsfso{F},h_1,h_2).$}\\
		\end{align*}
	\end{proof}
	\section{Numerical Examples and Error Analysis} \label{s6}
In this section, we provide some illustrations to support the applicability of our theoretical conclusions. We consider  $\mathrsfso{F}(x,y)=\sin(10x)+\cos(5y)$  defined on triangle $\Delta$ as the target function.\\
	In \cite{QIAN2022126781}, a few examples of activation function in $\mathrsfso{A}(m)$  were provided. In this paper, the following activation functions are taken into consideration for computational and graphical investigations of these operators:
	
	\begin{enumerate}
		\item A special ramp function $\rho_{R}$, introduced by Yu in \cite{Yu2016623}
		\begin{align}
			\rho_{R}(\varkappa)=  \left\{
			\begin{array}{ll}
				0,\quad &\varkappa\leq -\frac{1}{2}  \\
				10(\varkappa+\frac{1}{2})^{3} -15(\varkappa+\frac{1}{2})^{4}+6(\varkappa+\frac{1}{2})^{5}, \quad & -\frac{1}{2} \leq \varkappa \leq \frac{1}{2} \\ 1, \quad& \varkappa \geq \frac{1}{2}
			\end{array}.
			\right.
		\end{align}
		One can easily verify that $\rho_R \in \mathrsfso{A}(\frac{1}{2})$.\\
		Let $\varphi_{R}(\varkappa)=\rho_{R}(\varkappa+\frac{1}{2})-\rho_{R}(\varkappa-\frac{1}{2})$. Then
		\begin{equation}
			\varphi_{R}(\varkappa) =  \left\{
			\begin{array}{ll}
				0,\quad &\varkappa\leq -1  \\
				10(\varkappa+1)^{3} -15(\varkappa+1)^{4}+6(\varkappa+1)^{5}, \quad & -1 \leq \varkappa \leq 0
				\\1-(10\varkappa^3 -15\varkappa^4 +6\varkappa^5), \quad & 0 \leq \varkappa \leq  1 \\ 0, \quad & \varkappa \geq 1
			\end{array}
			\right.
		\end{equation}

		\item Another ramp function $\sigma_{R}$, introduced by Costarelli in \cite{COSTARELLI2014574}
		\begin{align}
			\sigma_{R}(\varkappa)=  \left\{
			\begin{array}{ll}
				0,\quad &\varkappa\leq -\frac{1}{2}  \\
				\varkappa + \frac{1}{2}, \quad & -\frac{1}{2} \leq \varkappa \leq \frac{1}{2} \\ 1, \quad& \varkappa \geq \frac{1}{2}
			\end{array}
			\right.
		\end{align}
		One can easily verify that $\sigma_R \in \mathrsfso{A}(\frac{1}{2})$.\\
		Let $\Psi_{R}(\varkappa)=\sigma_{R}(\varkappa+\frac{1}{2})-\sigma_{R}(\varkappa-\frac{1}{2})$. Then
		\begin{equation}
			\Psi_{R}(\varkappa) =  \left\{
			\begin{array}{ll}
				0,\quad &\varkappa\leq -1  \\
				\varkappa+1, \quad & -1 \leq \varkappa \leq 0
				\\1-\varkappa, \quad & 0 \leq \varkappa \leq  1 \\ 0, \quad & \varkappa \geq 1
			\end{array}
			\right.
		\end{equation}

		\item 
		\begin{align}
			\sigma(\varkappa) =  \left\{
			\begin{array}{ll}
				0\quad & \varkappa\leq -1   \\
				\frac{1}{2}\varkappa+\frac{1}{2} \quad &-1 \leq \varkappa \leq 1
				\\ 1 \quad &\varkappa \geq 1
			\end{array}
			\right.
		\end{align}
		One can easily verify that $\sigma \in \mathrsfso{A}(1)$.\\
		Let $\varphi(\varkappa)=\sigma(\varkappa+1)-\sigma(\varkappa-1)$. Then
		\begin{equation}
			\varphi(\varkappa) =  \left\{
			\begin{array}{ll}
				0,\quad & \varkappa\leq -2   \\
				-\frac{1}{2}\varkappa+1, \quad & -2 \leq \varkappa \leq 0
				\\ \frac{1}{2}\varkappa+1, \quad & 0 \leq \varkappa \leq 2
				\\0, \quad & \varkappa \geq 2 
			\end{array}.
			\right.
		\end{equation}
	\end{enumerate}
	Table \ref{T1} gives the approximation error by operators $\mathcal{S}_{n_1, \rho_{R}\sigma}^{x}(\mathrsfso{F})$, $\mathcal{S}_{n_2, \rho_{R}}^{y}(\mathrsfso{F})$, $\mathcal{P}_{n_1, n_2, \rho_{R}}$ and $\mathcal{B}_{n_1, n_2, \rho_{R}}(\mathrsfso{F})$, while Table \ref{T2} gives the approximation error by operators $\mathcal{S}_{n_1, \sigma}^{x}(\mathrsfso{F})$, $\mathcal{S}_{n_2, \sigma}^{y}(\mathrsfso{F})$, $\mathcal{P}_{n_1, n_2, \sigma}(\mathrsfso{F})$ and $\mathcal{B}_{n_1, n_2, \sigma}(\mathrsfso{F})$. It shows that approximation error varies with change in activation function.
	\begin{table}[h] 
		\begin{tabular}{ c c c c c } 
			\hline
			$n_1=n_2$ & $\|\mathcal{S}_{n_1,\rho_{R}}^{x}(\mathrsfso{F}) -\mathrsfso{F}\|$ & $\|\mathcal{S}_{n_2,\rho_{R}}^{y}(\mathrsfso{F}) -\mathrsfso{F}\|$ & $\|\mathcal{P}_{n_1, n_2,\rho_{R}}(\mathrsfso{F}) -\mathrsfso{F}\|$& $\|\mathcal{B}_{n_1, n_2,\rho_{R}}^{y}(\mathrsfso{F}) -\mathrsfso{F}\|$ \\ 
			\hline 
			5   & 0.452497 & 0.156546 & 0.478877  & 1.5438e-15\\  
			15  & 0.101601 & 0.049354 & 0.118881  & 1.6701e-15\\ 
			30  & 0.048032 & 0.023884 & 0.057413  & 1.6787e-15\\  
			50  & 0.027995 & 0.014066 & 0.035111  & 1.5326e-15\\ 
			75  & 0.019564 & 0.009770 & 0.022506  & 1.7282e-15\\ 
			100 & 0.014114 & 0.007005 & 0.015961  & 1.7734e-15\\ 
			\hline 
		\end{tabular}
		\caption{Approximation error of $\mathcal{S}_{n_1,\rho_{R}}^{x}(\mathrsfso{F})$, $\mathcal{S}_{n_2,\rho_{R}}^{y}(\mathrsfso{F})$, $\mathcal{P}_{n_1, n_2,\rho_{R}}(\mathrsfso{F})$, $\mathcal{B}_{n_1, n_2,\rho_{R}}(\mathrsfso{F})$} \label{T1}
	\end{table}
	\begin{table}[h]
		\begin{tabular}{ c c c c c } 
			\hline
			$n_1 =n_2$ & $\|\mathcal{S}_{n_1,\xi}^{x}(\mathrsfso{F}) -\mathrsfso{F}\|$ & $\|\mathcal{S}_{n_2,\xi}^{y}(\mathrsfso{F}) -\mathrsfso{F}\|$ & $\|\mathcal{P}_{n_1, n_2,\xi}(\mathrsfso{F}) -\mathrsfso{F}\|$& $\|\mathcal{B}_{n_1, n_2,\xi}^{y}(\mathrsfso{F}) -\mathrsfso{F}\|$ \\
			\hline
			5    & 0.440815 & 0.114639 & 0.440815  & 1.3971e-15\\  
			15   & 0.054179 & 0.013715 & 0.061273  & 1.5412e-15\\ 
			30   & 0.054179 & 0.003450 & 0.015711  & 1.6072e-15\\  
			50   & 0.004995 & 0.001250 & 0.005642  & 1.3757e-15\\ 
			75   & 0.002218 & 0.000555 & 0.002423  & 1.6474e-15\\ 
			100  & 0.001222 & 0.000289 & 0.001390  & 1.7359e-15\\ 
			\hline
		\end{tabular}
		\caption{Approximation error of $\mathcal{S}_{n_1,\sigma}^{x}(\mathrsfso{F})$, $\mathcal{S}_{n_2,\sigma}^{y}(\mathrsfso{F})$, $\mathcal{P}_{n_1, n_2,\sigma}(\mathrsfso{F})$, $\mathcal{B}_{n_1, n_2,\sigma}(\mathrsfso{F})$} 	\label{T2} 
	\end{table}
	\newline It is evident from Table \ref{T1} and Table \ref{T2} that with an increase in the value of $n_1$ and $n_2$ the  approximation error reduces. The weights and the bias related with $n_1$ and $n_2$ get adjusted to yield the desired results.
	\section{Graphical Analysis}	\label{s7} 
	For graphical analysis, we let $\mathrsfso{F}(x,y) = -\frac{1}{7} \exp \left( - \frac{81}{16} \left( (x - 0.2)^2 + (y - 0.3)^2 \right) \right)$ be the target function. Figure (\ref{fig:1}), (\ref{fig:p}) (\ref{fig:2}), (\ref{fig:3}), (\ref{fig:4}) and (\ref{fig:5}) represent the function $\mathrsfso{F}$, the projection of function $\mathrsfso{F}$,  operators $\mathcal{S}^{x}_{\xi,n_1}\mathrsfso{F}$, $\mathcal{S}^{y}_{\xi,n_2}\mathrsfso{F}$,  bivariate operator $\mathcal{P}_{n_1, n_2}\mathrsfso{F}$ and Generalized Boolean sum  operator $\mathcal{\mathcal{B}}_{n_1, n_2}\mathrsfso{F}$ respectively for $n_1=n_2=15$. It is evident from the graphs that each approximation mimics the graph of the target function $\mathrsfso{F}$ as per theoretical results obtained. 
	
	\begin{figure}
		\begin{subfigure}{.5\textwidth}
			\includegraphics[width=1\linewidth, height=5cm]{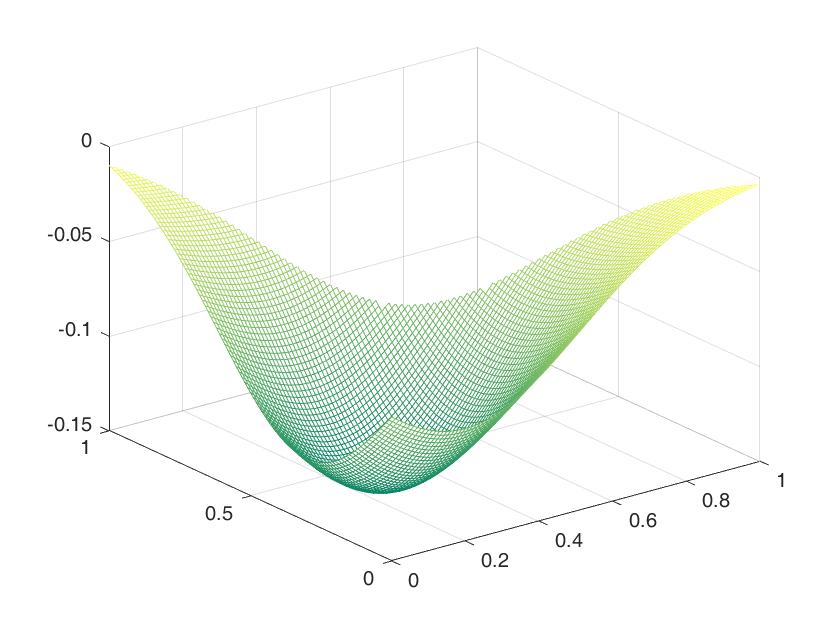}
			\caption{$\mathrsfso{F}(x,y)$}\label{f5}
			\label{fig:1}
		\end{subfigure}
		\begin{subfigure}{.5\textwidth} 
			\includegraphics[width=6.5cm, height=5cm]{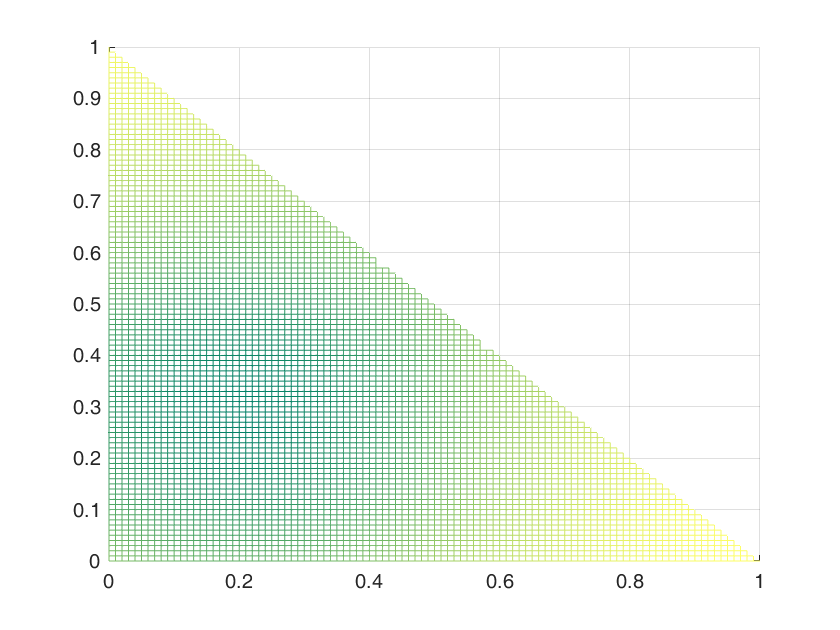} 
			\caption{Projection of function $\mathrsfso{F}$}\label{f5}
			\label{fig:p}
		\end{subfigure}
		\begin{subfigure}{.5\textwidth}
			\includegraphics[width=1\linewidth, height=5cm]{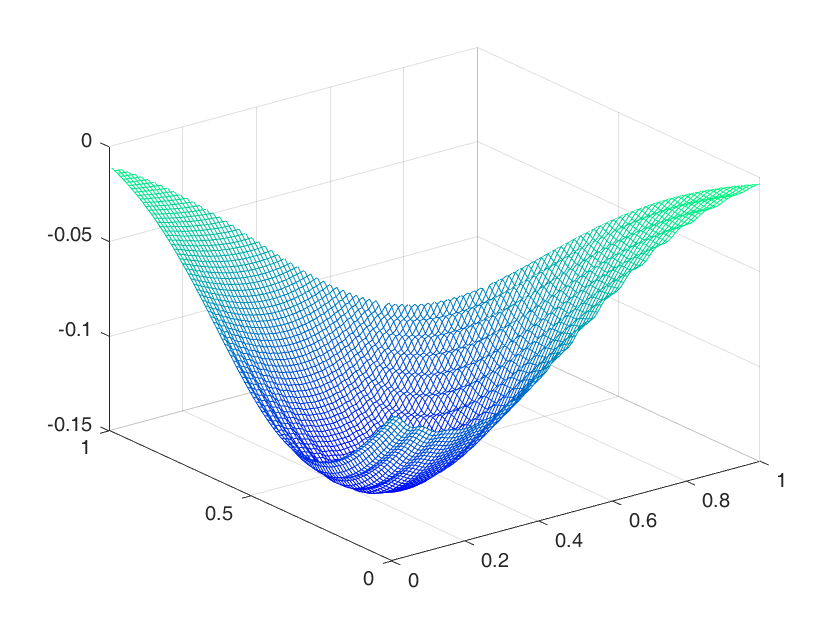}
			\caption{ $\mathcal{S}_{\xi, n}^{x}\mathrsfso{F}$}\label{f1}
			\label{fig:2}
		\end{subfigure}
		\begin{subfigure}{.5\textwidth}
			\includegraphics[width=1\linewidth, height=5cm]{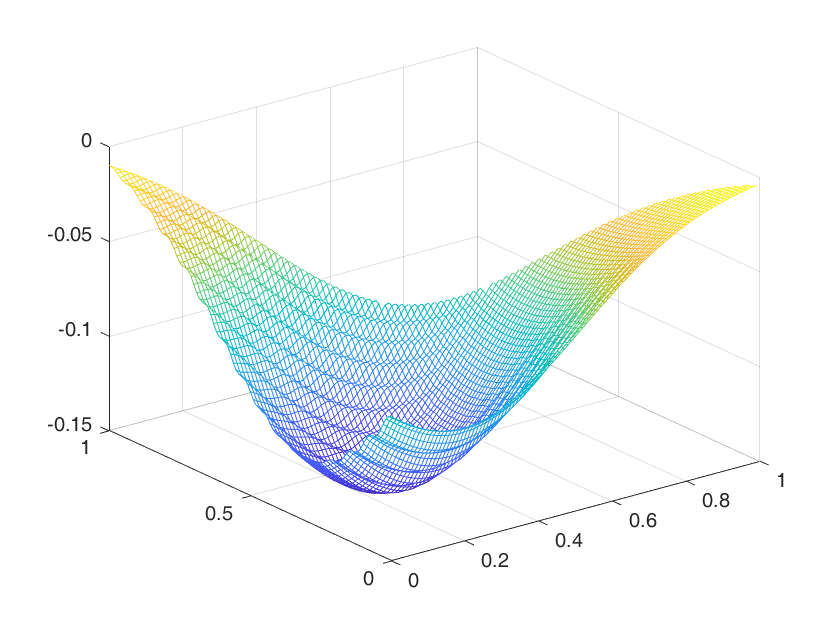}
			\caption{$\mathcal{S}_{\xi, n_2}^{y}\mathrsfso{F}$}\label{f2}
			\label{fig:3}
		\end{subfigure}
		\begin{subfigure}{.5\textwidth}
			\includegraphics[width=1\linewidth, height=5cm]{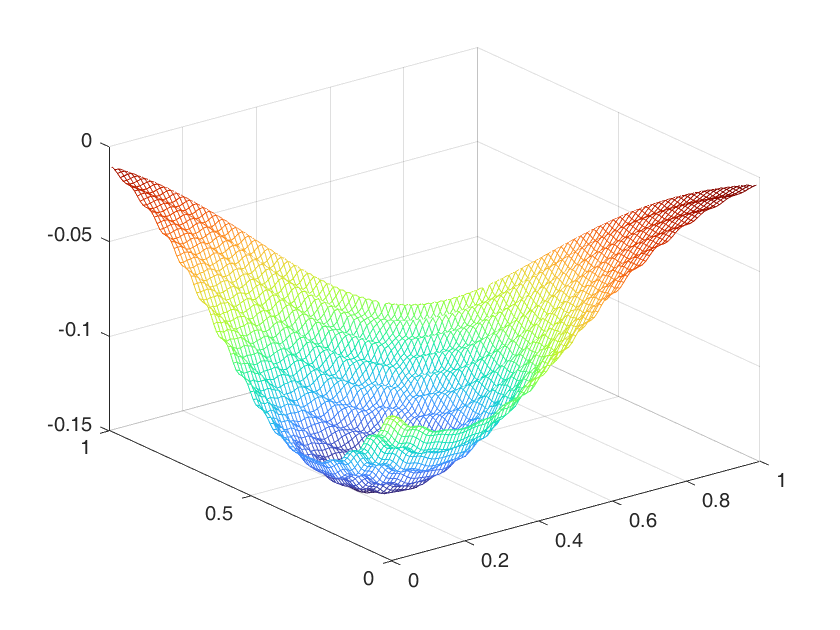}
			\caption{$\mathcal{P}_{n_1 n_2}\mathrsfso{F}$}\label{f3}
			\label{fig:4}
		\end{subfigure}
		\begin{subfigure}{.5\textwidth}
			\includegraphics[width=1\linewidth, height=5cm]{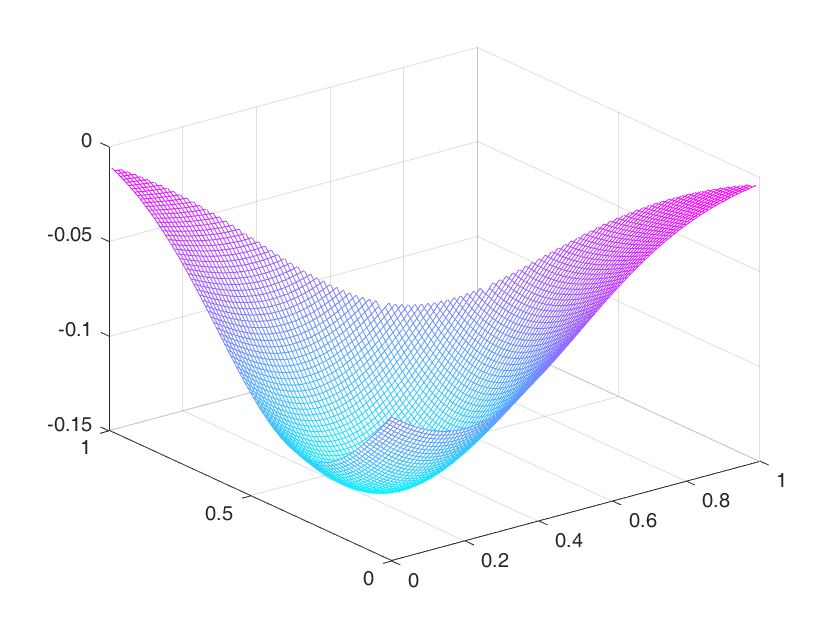}
			\caption{$\mathcal{\mathcal{B}}_{mn}\mathrsfso{F}$}\label{f4}
			\label{fig:5}
		\end{subfigure}
		\caption{}
	\end{figure}
	\newpage
	
	\bibliographystyle{plain}

\end{document}